\numberwithin{equation}{section} 
\newtheorem{theorem}{Theorem}[section]
\newtheorem{lemma}[theorem]{Lemma}
\newtheorem{proposition}[theorem]{Proposition}
\newtheorem{corollary}[theorem]{Corollary}
\theoremstyle{definition}
\newtheorem{definition}[theorem]{Definition}
\newtheorem{example}[theorem]{Example}
\theoremstyle{remark}
\newcommand{\sC}{\mathcal{C}}
\newcommand{\sE}{\mathcal{E}}
\newcommand{\sF}{\mathcal{F}}
\newcommand{\Oh}{\mathcal{O}}
\newcommand{\sO}{\mathcal{O}}
\newcommand{\sW}{\mathcal{W}}
\newcommand{\K}{\mathbb{K}}
\newcommand{\Z}{\mathbb{Z}}
\newcommand{\C}{\mathbb{C}}
\newcommand{\debar}{\overline{\partial}}
\newcommand{\Hom}{\operatorname{Hom}}
\newcommand{\Ext}{\operatorname{Ext}}
\newcommand{\Aut}{\operatorname{Aut}}
\newcommand{\Span}{\operatorname{Span}}
\newcommand{\Id}{\operatorname{Id}}
\newcommand{\Tr}{\operatorname{Tr}}
\newcommand{\RHom}{R\mspace{-2mu}\operatorname{Hom}}
\newcommand{\sG}{\mathcal{G}}
\newcommand{\sI}{\mathcal{I}}
\newcommand{\sA}{\mathcal{A}}
\newcommand{\HOM}{{{\mathcal H}om}}
\begin{document}

\title{Formality conjecture for minimal surfaces of Kodaira dimension 0}

\date{July 2, 2020}

\author[R. Bandiera]{Ruggero Bandiera}
\address{\newline
Universit\`a degli studi di Roma La Sapienza,\hfill\newline
Dipartimento di Matematica Guido
Castelnuovo,\hfill\newline
P.le Aldo Moro 5,
I-00185 Roma, Italy.}
\email{bandiera@mat.uniroma1.it}

\author[M. Manetti]{Marco Manetti}
\email{manetti@mat.uniroma1.it}
\urladdr{www.mat.uniroma1.it/people/manetti/}

\author[F. Meazzini]{Francesco Meazzini}
\email{meazzini@mat.uniroma1.it}

\subjclass[2010]{14F05, 14D15, 16W50, 18G55}
\keywords{Deformation theory, polystable sheaves, formality, differential graded Lie algebras, $L_{\infty}$ algebras}

\begin{abstract} Let $\sF$ be a polystable sheaf on
a smooth minimal projective surface of Kodaira dimension 0. Then the DG-Lie algebra 
$R\Hom(\sF,\sF)$ of derived endomorphisms of $\sF$ is formal. 
The proof is based on the study of equivariant $L_{\infty}$ minimal models of DG-Lie algebras equipped 
with a cyclic structure of degree 2 which is non-degenerate in cohomology, 
and does not rely (even for K3 surfaces) on 
previous results on the same subject. 
\end{abstract}

\maketitle

\section{Introduction}

The main goal of this paper is to provide an elementary proof of the following theorem, that extends an analogous result for K3 surfaces \cite{BZ18}.

\begin{theorem}[=Theorem~\ref{thm.formality2}]
Let $X$ be a smooth minimal projective surface of Kodaira dimension $0$, and consider a polystable sheaf $\sF$ on $X$. Then the DG-Lie algebra $\RHom_X(\sF,\sF)$ is formal.
\end{theorem}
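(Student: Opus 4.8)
The plan is to reduce, via a finite \'etale Galois cover, to the case of trivial canonical bundle, where Serre duality provides a genuine cyclic structure of degree $2$, and then to run the equivariant cyclic $L_\infty$ minimal model machinery on the resulting DG-Lie algebra.

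First I would replace $\RHom_X(\sF,\sF)$ by a concrete DG-Lie algebra model $L$ with $H^*(L)=\Ext^*_X(\sF,\sF)$ (for instance a Dolbeault-type model, or one built from a locally free resolution of $\sF$). By the Enriques--Kodaira classification a smooth minimal projective surface $X$ of Kodaira dimension $0$ is a K3 surface, an abelian surface, an Enriques surface, or a bielliptic surface; in the last two cases there is a finite \'etale Galois cover $\pi\colon Y\to X$ with finite Galois group $G$ and with $Y$ a K3 surface, respectively an abelian surface (in the first two cases take $\pi=\Id$, $G=1$). In every case $\omega_Y\cong\sO_Y$. Set $\sF_Y:=\pi^*\sF$; since $\pi$ is finite \'etale, $\sF_Y$ is again polystable (a Hermite--Einstein metric on $\sF$ pulls back to one on $\sF_Y$ for the pulled-back K\"ahler metric, or argue directly that the pullback of a stable sheaf along a finite \'etale map is polystable of the same slope). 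The projection formula and exactness of $\pi^*$ give, for the relevant endomorphism complex, a $G$-equivariant identification $\RHom_Y(\sF_Y,\sF_Y)^G\simeq\RHom_X(\sF,\sF)$ of DG-Lie algebras; modelling $\RHom_Y(\sF_Y,\sF_Y)$ by a $G$-DG-Lie algebra $L_Y$ with $H^*(L_Y)=\Ext^*_Y(\sF_Y,\sF_Y)$ we thus have $L\simeq L_Y^G$ as DG-Lie algebras, with $G$ finite.

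Next I would equip $L_Y$ with the cyclic structure. Because $\omega_Y\cong\sO_Y$, the trace map $\Ext^2_Y(\sF_Y,\sF_Y)\to H^2(Y,\sO_Y)\xrightarrow{\ \sim\ }\C$ composed with the Yoneda product defines a graded-symmetric pairing on $\Ext^*_Y(\sF_Y,\sF_Y)$ which is non-degenerate by Serre duality, has degree $2$, and is $G$-invariant by functoriality of the trace and of the cup product. Lifting it to $L_Y$ (after a further quasi-isomorphism if needed) makes $L_Y$ a $G$-DG-Lie algebra carrying a $G$-invariant cyclic structure of degree $2$ that is non-degenerate in cohomology. Moreover $H^0(L_Y)=\End_Y(\sF_Y)$ is a reductive Lie algebra since $\sF_Y$ is polystable, and $\Ext^*_Y(\sF_Y,\sF_Y)$ is concentrated in degrees $0,1,2$. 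These are precisely the hypotheses of the equivariant cyclic minimal model theorem: it produces a $G$-equivariant cyclic $L_\infty$ minimal model of $L_Y$ in which the higher brackets $\ell_{\ge 3}$ can be gauged away $G$-equivariantly, so that $L_Y$ is $G$-equivariantly formal, quasi-isomorphic as a $G$-DG-Lie algebra to $\Ext^*_Y(\sF_Y,\sF_Y)$ with its induced bracket.

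Finally, since $G$ is finite and we work in characteristic zero, the functor $(-)^G$ is exact, commutes with cohomology, and sends $L_\infty$ quasi-isomorphisms of $G$-$L_\infty$ algebras to $L_\infty$ quasi-isomorphisms; applying it to the chain of $G$-equivariant quasi-isomorphisms above gives
\[
\RHom_X(\sF,\sF)\simeq L\simeq L_Y^G\simeq\bigl(\Ext^*_Y(\sF_Y,\sF_Y)\bigr)^G=H^*\!\bigl(L_Y^G\bigr)=\Ext^*_X(\sF,\sF),
\]
so $\RHom_X(\sF,\sF)$ is formal. The main obstacle is exactly the technical core singled out in the abstract: constructing a minimal model that is simultaneously $G$-equivariant and compatible with the degree-$2$ cyclic pairing (an equivariant, cyclic refinement of homotopy transfer), and then proving that for such structures in ``Calabi--Yau dimension $2$'' with reductive $H^0$ the obstruction to removing $\ell_{\ge 3}$ vanishes --- this is where the interplay between the non-degenerate degree-$2$ pairing and the semisimplicity of the $H^0$-action is decisive. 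By comparison, the descent along the \'etale cover, the behaviour of polystability under finite \'etale pullback, and the $G$-equivariance of the trace pairing are routine.
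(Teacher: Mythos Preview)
Your overall strategy --- pass to the canonical cover $\pi\colon Y\to X$, prove formality upstairs, and descend --- is natural and is morally related to what the paper does, but it differs from the paper's argument in a crucial way, and the version you wrote contains a genuine gap.

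The gap is the claim that the degree-$2$ cyclic pairing on $L_Y$ is $G$-invariant. It is not. The pairing is $(f,g)=\int_Y \omega_Y\wedge\Tr(fg)$ for a chosen holomorphic volume form $\omega_Y$; since $H^{2,0}(X)=H^{2,0}(Y)^G=0$ for Enriques and bielliptic surfaces, the Galois group $G$ acts on $H^{2,0}(Y)\cong\C$ by a \emph{nontrivial} character $\chi$, i.e.\ $g^*\omega_Y=\chi(g)\,\omega_Y$, and a change-of-variables shows $(g\cdot f,\,g\cdot h)=\chi(g)^{-1}(f,h)$. So the ``$G$-invariant cyclic structure'' you invoke does not exist, and the equivariant cyclic minimal model machinery cannot be applied as stated. (One can try to salvage this: the explicit formulas for the $L_\infty$ isomorphism in Section~\ref{sec.formalityforcyclicDGLA} involve the pairing only through expressions where the character would cancel, so a $\chi$-twisted version may well go through; but this requires a genuine reworking of the argument, together with care about the interaction between the $G$-action and the $\Aut(\sF_Y)$-equivariant splitting, none of which you address.)

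The paper circumvents this issue entirely, and the mechanism is instructive. Rather than working on $Y$ and descending by $G$-invariants, the paper stays on $X$: it replaces $\sF$ by $\sF\otimes\sC$ with $\sC=\bigoplus_{i=0}^{n-1}K_X^{\otimes i}$ (so $\sC=\pi_*\sO_Y$), and equips $L=A^{0,*}_X\bigl(\HOM(\sE^*\otimes\sC,\sE^*\otimes\sC)\bigr)$ with the pairing $(f,g)=\int_X p_{K_X}\widetilde{\Tr}(fg)$, where one projects the trace onto the $K_X$-component of $\sC$. This lands canonically in $H^2(X,K_X)\cong\C$ by Serre duality on $X$ --- no choice of volume form on $Y$, and no $G$-action to worry about. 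Corollary~\ref{cor.formality} then gives formality of $L$. Finally, $M=A^{0,*}_X(\HOM(\sE^*,\sE^*))$ sits inside $L$ as a DG-Lie subalgebra which is a direct summand of $L$ as an $M$-module, and the formality transfer theorem (Theorem~\ref{thm.formalitytransfer}) pulls formality back from $L$ to $M$. Note that $L$ here is $\RHom_X(\sF\otimes\sC,\sF\otimes\sC)$, which is strictly larger than your $L_Y\simeq\RHom_X(\sF,\sF\otimes\sC)$; and the descent step uses the formality transfer theorem rather than $G$-invariants. This is what buys the paper a clean argument free of the equivariance subtlety that trips up your proposal.
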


Moduli spaces of coherent sheaves on K3 surfaces and Abelian surfaces have been intensively studied during the last decades. Among the reasons behind the interest in these objects there is certainly the fact due to Mukai that the smooth locus of the moduli space inherits a holomorphic symplectic structure from the symplectic form on the surface, \cite{Muk84}. In particular, provided that such a moduli space is smooth and projective, it yields an example of irreducible holomorphic symplectic manifold. In general the moduli space is singular at a point corresponding to a strictly semistable sheaf; these singularities arise either when the Mukai vector is not primitive or when the polarization on the surface is not general (i.e. it lies on a wall with respect to the walls and chambers decomposition of the ample cone, \cite{KLS06,Yos01}). Nevertheless, in some cases there exist symplectic resolutions, which have been investigated for moduli spaces with general polarization and non-primitive Mukai vector: first O'Grady found out two new examples of irreducible holomorphic symplectic manifolds \cite{OG99,OG03} by exhibiting symplectic resolutions of moduli spaces of sheaves on a K3 surface and on an Abelian surface. Few years later Kaledin-Lehn-Sorger showed that other than the ones in O'Grady's examples such moduli spaces do not admit symplectic resolutions, \cite{KLS06}.

More recently, in \cite{AS} Arbarello-Sacc\`a turned the attention to the case of a K3 surface with a non-general polarization and Mukai vector $(0, c_1, \chi)$. The corresponding moduli space admits a symplectic resolution, given by moving the polarization (hence changing the notion of stability) into a chamber, and they give a local description of the moduli space around the singularity in terms of a suitable Nakajima quiver variety.

By general deformation theory, an easy description of an analytic neighborhood around a singular point $[\sF]$ in the moduli space corresponding to a given (possibly non-general) polarization can be deduced from the formality of the derived endomorphisms of the sheaf $\sF$ on the surface $X$. We now briefly recall the main steps that led to the so called Kaledin-Lehn \emph{formality conjecture}. It is well known that  the base space of the formal semiuniversal deformation  of $[\sF]$ is the scheme-theoretic fibre of the Kuranishi map
\[ k\colon \widehat{\Ext_X^1(\sF,\sF)} \to \Ext_X^2(\sF,\sF)_0 = \ker(\Tr\colon \Ext_X^2(\sF,\sF)\to H^2(X,\sO_X)\cong \C) \]
which can be chosen to be $G$-equivariant (see e.g.~\cite{AS,quadraticity,Rim80}) with respect to the action of the automorphisms group modulo the action of the scalars: $G=\Aut(\sF)/\C^{\ast}$. 
 Often it is definitely not trivial to compute the null-fiber of the Kuranishi map; on the other hand its quadratic part $k_2$ is nothing but the Yoneda pairing, so that in general it is much easier to understand $k_2^{-1}(0)$ instead of $k^{-1}(0)$. In \cite{KL07}, Kaledin-Lehn essentially conjectured that for a polystable sheaf on a K3 surface the Kuranishi map is \emph{quadratic}, namely $k_2^{-1}(0)\cong k^{-1}(0)$. If this condition is satisfied then the moduli space, locally around $[\sF]$, is isomorphic to the GIT quotient $k^{-1}_2(0)\!\sslash\! G$.

In their original paper Kaledin-Lehn gave a first example motivating and inspiring the future work on the subject. The conjecture has been then proven in full generality by Yoshioka \cite{Yos16}, and partially by Arbarello-Sacc\`a \cite{AS}. Let us point out few remarks before going on. First recall that to any (homotopy class of a) DG-Lie algebra it is associated a deformation functor (see e.g. \cite{Man09,LMDT}), which in turn provides a Kuranishi map via the Maurer-Cartan equation. Moreover, if the DG-Lie algebra $L$ is formal (i.e. it is quasi-isomorphic to its cohomology) then the associated Kuranishi space $k^{-1}(0)$ is the null-fiber of the cup product in cohomology $H^1(L)\to H^2(L)$. Hence, by showing the formality of the DG-Lie algebra $R\Hom(\sF,\sF)$ one also proves the quadraticity of the Kuranishi map.
Notice that since our approach involves techniques of $L_{\infty}$-algebras we investigate derived endomorphisms as a DG-Lie algebra, while the papers~\cite{KL07,BZ18} consider $R\Hom(\sF,\sF)$ as an associative DG-algebra and also Kaledin's refinement of the Massey products works in the associative setting,~\cite{Kal07}. It is important to point out that the formality in the associative case is a stronger statement, but on the other hand the DG-Lie formality is the one needed for applications to moduli spaces.

It is worth mentioning that formality is in general much stronger and harder to prove than the quadraticity property; from the point of view of derived algebraic geometry this can be easily understood since formality implies that the derived moduli space is  locally quadratic. Nevertheless, formality of $R\Hom(\sF,\sF)$ has been conjectured for polystable sheaves again by Kaledin-Lehn in \cite{KL07}, it has been studied in some cases by Zhang \cite{Zh12}, and finally completely solved by Budur-Zhang \cite{BZ18} who proved that the conjecture holds true for any polystable sheaf using results about strong uniqueness of DG-enhancements.

It is interesting to notice that all of the above cited formality results actually rely on the famous result due to Kaledin about formality in families \cite{Kal07,Lun10}. Even if the vanishing of Massey products does not guarantee the formality of a DG-algebra $A$, see e.g. \cite{HS79}, Kaledin determined a refinement of them defining the so-called \emph{Kaledin class}  in a certain (reduced) Hochschild homology group depending on $A$. Furthermore, he proved that the variation of such a class in a suitable family of DG-algebras $\sA\to S$ over an irreducible base $S$ glue to a global section of a certain obstruction bundle $\sO b_S$ defined on $S$. It follows that if $\sO b_S$ does not admit non-trivial global sections then all the fibers $\sA_s$ are formal, \cite[Theorem 4.3]{Kal07}.

Applying Kaledin's result and twistor spaces, in the paper \cite{KL07} Kaledin-Lehn first obtained the formality of $\RHom(\sF,\sF)$ for sheaves of the form $\sF=\sI_Z^{\oplus n}$, where $\sI_Z$ denotes the ideal sheaf of some $0$-dimensional closed subscheme $Z$. Later, Zhang showed that Kaledin's theorem may be applied to polystable sheaves with some constraints on the ranks of the corresponding stable summands \cite[Proposition 1.3]{Zh12}, hence enlarging the class of polystable sheaves for which the formality conjecture holds. Eventually in \cite{BZ18} Budur-Zhang established a very interesting result, namely that the formality of derived endomorphisms of any object in $D^b(X)$ is preserved under derived equivalences; hence the formality conjecture follows since by \cite{Yos09} any polystable sheaf can be mapped via a Fourier-Mukai transform to another polystable sheaf satisfying the hypothesis of \cite[Proposition 3.1]{Zh12}.

In our recent paper \cite{quadraticity}, we proved that for a sheaf $\sF$ whose automorphisms group is reductive, e.g. for any $\sF$ polystable, the quadraticity of the Kuranishi map and the formality of the DG-Lie algebra $R\Hom(\sF,\sF)$ are in fact equivalent conditions; our proof has the avail of relaxing the hypothesis on the surface which no longer needs to be a K3. This provides another evidence of the formality conjecture without involving powerful methods of DG-category theory, but instead relying on the work of Yoshioka \cite{Yos16} and Arbarello-Sacc\`a \cite{AS}. Actually, both the papers \cite{AS,Yos16} base their proofs of the quadraticity property on the fundamental work \cite{Zh12}, hence again Kaledin's theorem \cite{Kal07} seems to be essential.

The present paper aims to prove the formality conjecture for polystable sheaves on a smooth minimal projective surface of Kodaira dimension $0$. Examples of such surfaces include projective K3 surfaces, Enriques surfaces, bielliptic surfaces and Abelian surfaces, \cite{BPV, Beauville}.
One of the main innovations of our proof is that we translate the problem into a purely algebraic statement (see Theorem~\ref{thm.formality}) about formality of DG-Lie algebras endowed with some additional structure (see Definition~\ref{def.qcyclicdgla}), which will be proved only involving elementary techniques of (strong homotopic) DG-Lie algebras. In particular, maybe surprisingly, 
in the case of K3 and Abelian surfaces our proof of the formality conjecture only requires a basic knowledge of $L_\infty$ algebras and is self-contained, meaning that it does not involve neither Kaledin's result about formality in families nor the geometric situations considered by Zhang in \cite{Zh12}.
As pointed out by one of the referees, it is similar in spirit to Neisendorfer and Miller's proof of the fact that any 6-dimensional simply connected Poincar\'e duality space is formal \cite{NM78}.

The plan of the paper is as follows.
In Section~\ref{sec.review} we fix notations and briefly summarise the results needed in the rest of the paper about formality and $L_{\infty}$ algebras. In Section \ref{section.qcyclicDGLA} we introduce the notion of quasi-cyclic DG-Lie algebras and discuss examples arising from geometric situations: a DG-Lie algebra $(L,d,[-,-])$ with finite-dimensional cohomology equipped with a degree $-n$ symmetric bilinear form $(-,-)\colon L^{\odot2}\to\K[-n]$ is called \emph{quasi-cyclic of degree} $n$ provided that 
\[ (dx,y)=(-1)^{|x|+1}(x,dy),\qquad ([x,y],z) = (x,[y,z]),\qquad\forall\,x,y,z\in L \]
and the form induced induced in cohomology $(-,-)\colon H(L)^{\odot2} \to\K[-n]$ is non-degenerate. 

The typical example of quasi-cyclic DG-Lie algebra of degree $n$ is given by the Dolbeault resolution
$L=A^{0,*}_X(\HOM(\sE,\sE))$ of the sheaf of endomorphisms of a locally free sheaf $\sE$
on an $n$-dimensional manifold $X$ equipped with a nowhere vanishing holomorphic volume form $\omega_X$, with the pairing  $(f,g)=\int_X \omega_X\wedge \Tr(fg)$, see Example~\ref{ex.VBonK3}. A similar construction can be  also performed when $\sE$ is replaced by any coherent sheaf, see Section~\ref{sec.formality}.

Then Section~\ref{sec.formalityforcyclicDGLA} is entirely devoted to the  proof of our main algebraic result.
	
\begin{theorem}[=Theorem \ref{thm.formality}]\label{thm.formalityintro}
Let $(L,d,[-,-],(-,-))$ be a quasi-cyclic DG-Lie algebra of degree $n\le 2$. Assume that  there exists a splitting $L=H\oplus d(K)\oplus K$ such that:
	\begin{enumerate}
	\item $H^i=0$ for $i<0$ (and hence also $H^i=0$ for $i>n$);\smallskip
	\item $H^0\subset L^0$ is closed with respect to the bracket $[-,-]$;\smallskip
	\item $H^i,K^i\subset L^i$ are $H^0$-submodules (with respect to the adjoint action) for all $i>0$;		
	\end{enumerate}
Then the DG-Lie algebra $(L,d,[-,-])$ is formal.		
\end{theorem}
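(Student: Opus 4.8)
The plan is to pass to a minimal $L_\infty$ model on the cohomology $H = H(L)$ via the chosen splitting $L = H \oplus d(K) \oplus K$, and to exploit the quasi-cyclic structure of degree $n \le 2$ together with the three hypotheses to kill all higher brackets. Concretely, the homotopy transfer theorem (in the form recalled in Section~\ref{sec.review}) yields an $L_\infty$ structure $\{\ell_k\}_{k \ge 2}$ on $H$ with $\ell_1 = 0$ and $\ell_2 = $ the induced bracket, quasi-isomorphic to $L$; moreover, since the splitting respects the $H^0$-module structure by hypotheses~(2) and~(3), the transferred $\ell_k$ can be taken $H^0$-equivariant, and since $L$ is quasi-cyclic with the contraction $K$ compatible with the pairing (which one should first arrange, adjusting the splitting so that $(-,-)$ is ``strictly'' compatible — i.e.\ the homotopy is skew self-adjoint), the transferred structure is cyclic: the tensors $(\ell_k(x_1,\dots,x_k), x_{k+1})$ are graded-(anti)symmetric in all $k+1$ arguments. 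To prove formality it then suffices to exhibit an $L_\infty$-isomorphism of $(H, \{\ell_k\})$ with $(H, \ell_2)$, equivalently (by an obstruction-theoretic / gauge argument) to show inductively that each $\ell_k$ for $k \ge 3$ is an $L_\infty$-coboundary, i.e.\ can be removed by a change of coordinates.

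The key computation is a \emph{degree count} using hypothesis~(1), that $H$ is concentrated in degrees $0, 1, \dots, n$ with $n \le 2$. The bracket $\ell_k$ has degree $2-k$, so $\ell_k \colon (H^{\otimes k})^{j} \to H^{j+2-k}$. For $k \ge 3$ the target degree $j + 2 - k \le j - 1$, so nonzero values of $\ell_k$ force the inputs to have relatively high degree and the output low degree; combined with the cyclic symmetry, $(\ell_k(x_1,\dots,x_k),x_{k+1})$ lives in degree $\sum |x_i| + 2 - k - n$ (the pairing has degree $-n$), and non-degeneracy in cohomology forces this to vanish unless $\sum_{i=1}^{k+1} |x_i| = n + k - 2$. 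Since each $|x_i| \ge 0$ and at most finitely many can be positive, for $n \le 2$ this is extremely restrictive: e.g.\ for $n = 2$ one needs $k+1$ arguments whose degrees sum to $k$, so all but one argument has degree $1$ and the remaining one has degree $0$ (or similar low-degree patterns), and the $H^0$-equivariance from hypotheses~(2)--(3) together with the fact that the $H^0$-action and the bracket with $H^0$ are already captured by $\ell_2$ lets one absorb the degree-$0$ argument. One then shows these surviving components of $\ell_k$, $k \ge 3$, are automatically closed in the relevant deformation complex and, by a further degree count on the space of potential primitives (again using $H^i = 0$ outside $[0,n]$), are exact — so they can be gauged away one $k$ at a time, preserving cyclicity and equivariance at each stage by choosing the gauge to be itself cyclic and equivariant.

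I would organize the write-up as: (a) reduce to a cyclic, $H^0$-equivariant minimal model; (b) set up the induction removing $\ell_k$ for $k = 3, 4, \dots$; (c) do the degree bookkeeping that shows the obstruction and the primitive both live in spaces that the hypotheses constrain; (d) check that the gauge transformation can be chosen cyclic and equivariant so the inductive hypotheses are maintained. The main obstacle I anticipate is part~(a) together with the bookkeeping in~(c): arranging the transferred $L_\infty$ structure to be \emph{simultaneously} cyclic \emph{and} $H^0$-equivariant requires choosing the homotopy $K \to d(K)$ carefully (self-adjoint with respect to the pairing and $H^0$-linear at the same time), and one must verify this is possible — e.g.\ by averaging over the reductive group generated by $H^0$, using that $H^0$ is a reductive Lie algebra because hypothesis~(2) makes it the Lie algebra of the automorphisms modulo scalars of a polystable object — and then the delicate point is that the degree constraints alone do not obviously kill $\ell_k$; one genuinely needs cyclicity to convert ``output in low degree'' into ``all inputs in low degree,'' and then the finiteness of positive-degree directions to terminate the induction.
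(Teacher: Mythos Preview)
Your proposal has a genuine gap at step~(a), and it is precisely the obstacle you yourself flag as delicate. You want the transferred $L_\infty$ structure on $H$ to be \emph{cyclic}, i.e.\ the tensors $(\ell_k(x_1,\dots,x_k),x_{k+1})$ to be fully symmetric in $k+1$ arguments. If that held, your argument would in fact be too strong: combined with your (correct) degree count --- for $n=2$ and $k\ge 3$ at least one of the $k+1$ slots must lie in $H^0$ --- and with the vanishing $\ell_k(g,\dots)=0$ for $g\in H^0$ (which does follow from hypotheses~(2)--(3), cf.\ Lemma~\ref{lem.secondvanishing}), full cyclicity would force $\ell_k=0$ for all $k\ge 3$ outright, with no gauging needed. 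But the paper states explicitly (just after Lemma~\ref{lem.firstvanishing}) that the higher brackets $\{\cdots\}_p$ do \emph{not} vanish for $p\ge 4$ in general. So the transferred structure is \emph{not} cyclic, and cannot be made so: the pairing on $L$ is only non-degenerate in cohomology, hence there is no reason a skew self-adjoint homotopy exists. The best one can arrange (Lemma~\ref{lem.semiorto}) is $H\perp K$, which gives the weaker identities $(h(l),\imath_1(x))=0$ and $(\pi(l),x)=(l,\imath_1(x))$, not full cyclicity on $H$.

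Your proposed fix --- averaging over ``the reductive group generated by $H^0$'' --- also fails, and for an independent reason: nothing in the hypotheses says $H^0$ is reductive. The theorem is purely algebraic; hypothesis~(2) only asserts $[H^0,H^0]\subset H^0$. The reductivity of $\Aut(\sF)$ enters only in the geometric applications (Corollary~\ref{cor.formality}, Theorem~\ref{thm.formality1}) and is used there to \emph{produce} a splitting satisfying~(1)--(3), not in the proof of Theorem~\ref{thm.formality} itself. What the paper does instead is keep the cyclic identity on $L$ (not on $H$) and \emph{construct by hand} an $L_\infty$ isomorphism $f\colon (H,\{\cdots\}_p)\to (H,\{-,-\})$ with $f_1=\Id$, $f_2=0$, and $f_p$ defined recursively by pairing against $H^1$ via the explicit formula~\eqref{eq:Looformula}. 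The verification that $f$ is an $L_\infty$ morphism uses only the $H^0$-equivariance of the $\imath_p$ (equation~\eqref{eq:equivariance_for_i}) together with the cyclic relation $([l_1,l_2],l_3)=(l_1,[l_2,l_3])$ \emph{in $L$}; no cyclicity of the minimal model is ever invoked.
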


Finally, in Section~\ref{sec.formality} we discuss the applications to moduli spaces of sheaves on minimal projective surfaces of Kodaira dimension 0. We will first prove the formality conjecture for polystable sheaves on K3 and Abelian surfaces as an immediate consequence of Theorem~\ref{thm.formalityintro}, where the polystability assumption ensures the existence of the splitting with the required properties.

Then we will extend the formality result to polystable sheaves on surfaces with torsion canonical bundle. Here the idea is to use the cyclic covering trick in order to construct the DG-Lie algebra $\RHom_X(\sF,\sF)$ as a subalgebra of a suitable quasi-cyclic DG-Lie algebra satisfying the assumptions 
of Theorem~\ref{thm.formalityintro} and then use the formality transfer theorem  due to the second named author \cite[Theorem 3.4]{dglaformality}.

\section{Review of formality and minimal models of DG-Lie algebras}
\label{sec.review}

We work over a field $\K$ of characteristic zero for the algebraic part and over the field $\C$ of complex numbers for the geometric applications. Every complex of vector spaces is intended as a cochain complex.

By definition a DG-Lie algebra $L$ is formal if it is quasi-isomorphic to its cohomology DG-Lie algebra
$H^*(L)$, equipped with the trivial differential and the induced bracket. In order to avoid possible mistakes, it is useful to keep in mind that not every DG-Lie algebra is formal and that if $L$ is formal, then in general does not exist any direct quasi-isomorphism of DG-Lie algebras $H^*(L)\to L$. However, 
since the category of DG-Lie algebras admits a model structure where the fibrations (resp.: the weak equivalences) are the surjective maps (resp.: the quasi-isomorphisms) it follows that a DG-Lie algebra $L$ is formal if and only if there exists a span of surjective quasi-isomorphisms of DG-Lie algebras 
 $L\xleftarrow{}M\xrightarrow{}H^*(L)$.

Since  two DG-Lie algebras are quasi-isomorphic if and only if they are 
weak equivalent as $L_{\infty}$ algebras we also have that a  
DG-Lie algebra $L$ is formal if and only if there exist an $L_{\infty}$ algebra $H$ and 
a span of $L_{\infty}$ weak equivalences  
\begin{equation}\label{equ.span}
 L\xleftarrow{\quad}H\xrightarrow{\quad}H^*(L)\,.
\end{equation}

We assume that the reader is familiar with the notion and basic properties of $L_{\infty}$ algebras, see e.g. \cite{getzler04,K,LadaMarkl,LadaStas,LMDT}. For reader's convenience and for fixing the sign convention, we briefly recall here the definition of $L_{\infty}$ algebra in the version used for the explicit computations that we shall perform in Section~\ref{sec.formalityforcyclicDGLA}.

Let $V$ be a graded vector space. Given $v_1,\ldots,v_n$ 
homogeneous  vectors  of $V$ and a permutation $\sigma$ of $\{1,\ldots,n\}$, we denote by 
$\chi(\sigma;v_1,\ldots,v_n)=\pm 1$ the antisymmetric Koszul sign, defined by the relation 
\[ v_{\sigma(1)}\wedge\cdots\wedge v_{\sigma(n)}=\chi(\sigma;v_1,\ldots,v_n)\,  
 v_{1}\wedge\cdots\wedge v_{n}\,\]
in the $n$th exterior power $V^{\wedge n}$. We shall simply write $\chi(\sigma)$ instead of 
$\chi(\sigma;v_1,\ldots,v_n)$ when the vectors $v_1,\ldots,v_n$ are clear from the context.
For instance if $\sigma$ is the transposition exchanging 1 and 2 we have 
$\chi(\sigma)=-(-1)^{|v_1|\,|v_2|}$ where $|v|$ denotes the degree of the homogeneous vector $v$.  
Notice that if every $v_i$ has odd degree, then 
$\chi(\sigma)=1$ for every $\sigma$.

Because of the universal property of wedge powers, we shall constantly interpret every linear map
$V^{\wedge p}\to W$ as a graded skew-symmetric $p$-linear map $V\times\cdots\times V\to W$.

\begin{definition}
An $L_{\infty}$ algebra is the data of a graded vector space  $V$ together with  a sequence of (multi)linear maps 
$\{\cdots\}_n\colon V^{\wedge n}\to V$, $n\ge 1$, such that for every $n$:
\begin{enumerate}

\item $\{\cdots\}_n$ has degree $2-n$;

\item for every $v_1,\ldots,v_n\in V$ homogeneous
\begin{equation}\label{equ.linfinitostruttura} 
\sum_{k=1}^n(-1)^{n-k}\sum_{\sigma\in S(k,n-k)}\chi(\sigma)\,
\{\{v_{\sigma(1)},\ldots,v_{\sigma(k)}\}_k,v_{\sigma(k+1)},\ldots,v_{\sigma(n)}\}_{n-k+1}=0,
\end{equation}
where $S(k,n-k)=\{\sigma\in S_n\mid \sigma(i)<\sigma(i+1),\;\forall\, i\neq k  \}$ is the set of $(k,n-k)$-shuffles.
\end{enumerate}
\end{definition}

In the above definition we used the sign convention of 
\cite{getzler04,K,LMDT}, while  in \cite{LadaMarkl,LadaStas} the maps
$\{\cdots\}_k$ differ  by the sign $(-1)^{k(k-1)/2}$. 
Every DG-Lie algebra $(L,d,[-,-])$ is an $L_{\infty}$ algebra where $\{\cdot\}_1=d$, $\{\cdot\cdot\}_2=[-,-]$ and 
$\{\cdots\}_n=0$ for every $n>2$. If $\{\cdot\}_1=0$ the $L_{\infty}$ algebra is called minimal.

There exists a general notion of $L_{\infty}$ morphism (see e.g. \cite{LMDT}), but for simplicity of exposition we only recall here the case of 
morphisms from an $L_{\infty}$ algebra to a DG-Lie algebra: this particular case will be sufficient for our purposes.

\begin{definition}\label{def.morfismo} 
Let $(V,\{\cdot\}_1,\{\cdot\,\cdot\}_2,\{\cdots\}_3,\cdots)$ be an $L_{\infty}$ algebra and $(L,d,[-,-])$ a DG-Lie algebra. An $L_{\infty}$ morphism $g\colon V\to L$ is a sequence of maps 
$g_n\colon V^{\wedge n}\to L$, $n\ge 1$, with $g_n$ of degree $1-n$ such that, for every $n$ and every 
$v_1,\ldots,v_n\in V$ homogeneous we have
\[\begin{split}
&\frac{1}{2}\sum_{p=1}^{n-1}
\!\!\!\!\!\sum_{\quad\sigma\in S(p,n-p)}\!\!\!\!\!\!\chi(\sigma) (-1)^{(1-n+p)(|v_{\sigma(1)}|+\cdots+
|v_{\sigma(p)}|-p)}
\left[g_p(v_{\sigma(1)},\ldots, v_{\sigma(p)}),\vphantom{\sum}
g_{n-p}(v_{\sigma(p+1)},\ldots,v_{\sigma(n)})\right]\\
&\quad +dg_n(v_1,\ldots,v_n)=
\sum_{k=1}^{n}(-1)^{n-k}\sum_{\sigma\in S(k,n-k)}\chi(\sigma)g_{n-k+1}(\{v_{\sigma(1)},\ldots,v_{\sigma(k)}\}_k,\ldots,v_{\sigma(n)}).\end{split}\]
\end{definition}

An $L_{\infty}$ morphism $g$ as in Definition~\ref{def.morfismo} is called a weak equivalence or a quasi-isomorphism if  $g_1\colon (V,\{\cdot\}_1)\to (L,d)$ is a quasi-isomorphism of cochain complexes.

By homotopy classification of $L_{\infty}$ algebras \cite{K}, for every DG-Lie algebra $L$ there 
exists a minimal $L_{\infty}$ algebra $H$ and an $L_{\infty}$ weak equivalence  $\imath\colon H\to L$.
The  algebra $H$ is called the $L_{\infty}$ minimal model of $L$ and it is unique up to isomorphism, while the $L_{\infty}$ morphism $i$ is unique up to homotopy. By homological perturbation theory, every splitting of the complex $(L,d)$ induces canonically a  morphism $\imath\colon H\to L$ as above. 

Recall that a splitting of $(L,d)$ is a direct sum decomposition $L=H\oplus d(K)\oplus K$ such that $H,K$ are graded vector subspaces of $L$ and the restrictions of the differential $d$ to  $H$ and $K$ are respectively zero and injective, see \cite[Section 1.4]{Wei94}.
In particular, $d(L)=d(K)$, $Z(L)=H\oplus d(K)$ and the natural map $H\to H^*(L)$ is an isomorphism of graded vector spaces.
Denoting by $\hookrightarrow$ and $\twoheadrightarrow$ the inclusions and the projections given by the splitting $L=H\oplus d(K)\oplus K$, we define  the maps
\[\imath_1\colon H\hookrightarrow L,\qquad \pi\colon L\twoheadrightarrow H,
\qquad h\colon L\twoheadrightarrow d(K) \xrightarrow{\,-d^{-1}} K \hookrightarrow L\,,\]
that satisfy the contraction identities  
\[ d\imath_1 = 0,\quad \pi d =0,\quad \pi\imath_1 = \operatorname{id}_{H},\quad dh + hd = \imath_1\pi - \operatorname{id}_L,\quad h\imath_1 = 0,\quad \pi h = 0,\qquad h^2=0\,. \]

Then, a  minimal $L_{\infty}$ algebra $(H,0,\{\cdot\cdot\}_2,\{\cdots\}_3,\ldots)$ and an extension of $\imath_1$ to an
$L_{\infty}$ quasi-isomorphism $\imath\colon H\to L$  
are defined by the recursive equations 
\begin{equation}\label{equ.transfer1} \imath_p(\xi_1,\ldots,\xi_p) = \frac{1}{2}\sum_{k=1}^{p-1}\sum_{\sigma\in S(k,p-k)}\chi(\sigma)(-1)^{\alpha(\sigma)}  h[\imath_k(\xi_{\sigma(1)},\ldots),\imath_{p-k}(\ldots,\xi_{\sigma(p)})],\quad p\ge2,
\end{equation}
\begin{equation}\label{equ.transfer2} \{\xi_1,\ldots,\xi_p\}_p = \frac{1}{2}\sum_{k=1}^{p-1}\sum_{\sigma\in S(k,p-k)} \chi(\sigma)(-1)^{\alpha(\sigma)}  \pi[\imath_k(\xi_{\sigma(1)},\ldots),\imath_{p-k}(\ldots,\xi_{\sigma(p)})], \quad p\ge2,
\end{equation}
where 
\[ \alpha(\sigma)=(1-p+k)\left(k+\sum_{i=1}^k|\xi_{\sigma(i)}|\right).\]
Notice that  for every $\xi,\eta\in H$ we have 
\[\imath_2(\xi,\eta)=h[\imath_1(\xi),\imath_1(\eta)],\qquad 
\{\xi,\eta\}_2=\pi[\imath_1(\xi),\imath_1(\eta)],\] 
the integer
$\alpha(\sigma)$ is even for $p=2$ and $\chi(\sigma)(-1)^{\alpha(\sigma)}=1$
if  $|\xi_i|$ is odd  for every $i$. Formulas \eqref{equ.transfer1} and 
\eqref{equ.transfer2} are well known and essentially dates back to Kadeishvili's paper \cite{Kad}: the choice of signs comes from standard d\'ecalage isomorphisms applied to the explicit formulas used in \cite[Thm. 3.7]{yukawate}  and \cite{LMDT}.

In \cite{dglaformality} the second named author  proved a series of formality criteria for  DG-Lie algebras. 
As a consequence of these criteria we have the following formality transfer theorem, where 
$H_{CE}^*(A,B)$ denotes the Chevalley-Eilenberg cohomology of the graded Lie algebra $A$ with coefficient in the $A$-module $B$:

\begin{theorem}[{\cite[Theorem 3.4]{dglaformality}}]\label{thm.formalitytransfer} 
Let $f\colon M\to L$ be a morphism of 
differential graded Lie algebras. Assume that 
\begin{enumerate}

\item\label{it1.thm.formalitytransfer}  $L$ is formal;

\item\label{it2.thm.formalitytransfer}  the  induced map 
$f\colon H^2_{CE}(H^*(M),H^*(M))\to H^2_{CE}(H^*(M),H^*(L))$ is injective.
\end{enumerate}
Then also $M$ is formal. 
In particular, if $L$ is formal, $f$ is injective and $f(M)$ is a direct summand of $L$ as 
$M$-module, then also $M$ is formal.
\end{theorem}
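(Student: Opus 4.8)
The plan is to recast formality as the vanishing, up to gauge, of a single Maurer--Cartan type obstruction living in Chevalley--Eilenberg cohomology, and then to transport that obstruction along $f$ using the injectivity hypothesis. Write $H_M=H^*(M)$ and $H_L=H^*(L)$ for the cohomology graded Lie algebras, and use the explicit homotopy transfer formulas \eqref{equ.transfer1}--\eqref{equ.transfer2} to fix $L_\infty$ minimal models $\imath^M\colon(H_M,\{\cdots\}^M_k)\to M$ and $\imath^L\colon(H_L,\{\cdots\}^L_k)\to L$, whose binary brackets $\{\cdot\,\cdot\}_2$ are the brackets induced on cohomology. A minimal $L_\infty$ structure on $H_M$ with prescribed $\{\cdot\,\cdot\}_2$ is precisely a Maurer--Cartan element $\gamma_M=\sum_{k\ge3}\{\cdots\}^M_k$ of the DG-Lie algebra of coderivations of the symmetric coalgebra on $H_M$; under the standard décalage this complex computes $H^{*}_{CE}(H_M,H_M)$, the element $\gamma_M$ sits in total degree $2$, and its differential is the bracket with $\{\cdot\,\cdot\}_2$. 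It then suffices to show that $\gamma_M$ can be gauged to $0$: such a gauge is an $L_\infty$ isomorphism of $(H_M,\{\cdots\}^M_k)$ with the strict model $H^*(M)$, and composing with $\imath^M$ produces a span \eqref{equ.span} witnessing formality.

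Since $L$ is formal I may choose its minimal model to be strict, i.e. $\gamma_L=0$, so that $(H_L,\{\cdots\}^L_k)=H^*(L)$. Functoriality of homotopy transfer applied to $f$ then yields an $L_\infty$ morphism $\psi\colon(H_M,\{\cdots\}^M_k)\to H^*(L)$ (in the sense of Definition~\ref{def.morfismo}) whose linear part $\psi_1$ is $H^*(f)$, thereby inducing the $H_M$-module structure on $H_L$ used in hypothesis (2). Now I argue by induction on the least arity $k\ge3$ with $\{\cdots\}^M_k\ne0$. Extracting the lowest-arity part of the Maurer--Cartan equation $d_{CE}\gamma_M+\tfrac12[\gamma_M,\gamma_M]=0$ gives $d_{CE}\{\cdots\}^M_k=0$, so $\{\cdots\}^M_k$ defines a class in $H^2_{CE}(H_M,H_M)$. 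Extracting the lowest-arity part of the $L_\infty$ morphism relation for $\psi$, and using $\gamma_L=0$, shows that $H^*(f)\circ\{\cdots\}^M_k=d_{CE}\,\psi_{k-1}$ is a coboundary in the complex with coefficients in the $H_M$-module $H_L$. Hence the image of $[\{\cdots\}^M_k]$ under the map of hypothesis (2) vanishes, and by injectivity $[\{\cdots\}^M_k]=0$ in $H^2_{CE}(H_M,H_M)$. A gauge transformation along a primitive of $\{\cdots\}^M_k$ removes the arity-$k$ term while leaving lower arities untouched, strictly raising the least nonzero arity; iterating over $k$ gauges $\gamma_M$ to $0$, so $M$ is formal.

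For the final assertion, if $f$ is injective then $f\colon M\to f(M)$ is an isomorphism of DG-Lie algebras, hence $H^*(f)\colon H_M\to H^*(f(M))$ is an isomorphism; and if $f(M)$ is a direct summand of $L$ as an $M$-module, say $L=f(M)\oplus N$ with $N$ a DG-Lie $M$-submodule, then $H_L\cong H_M\oplus H^*(N)$ as $H_M$-modules, so $H^2_{CE}(H_M,-)$ splits accordingly and the map of hypothesis (2) becomes the inclusion of a direct summand, in particular injective. The step I expect to be the main obstacle is the leading-order matching in the second paragraph: one must carefully isolate the lowest-arity components of both the Maurer--Cartan equation and the morphism equations of Definition~\ref{def.morfismo}, reconcile the Koszul signs under the décalage that normalizes everything to total degree $2$, and verify that the resulting obstruction class is independent of the auxiliary choices (splitting and homotopy) so that its transport along $f$ is well defined. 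One must also confirm that the inductive gauge action converges, which here is automatic because it acts pro-nilpotently, adjusting each fixed arity only finitely often.
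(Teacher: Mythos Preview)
The paper does not actually prove Theorem~\ref{thm.formalitytransfer}: it is quoted verbatim from \cite[Theorem~3.4]{dglaformality} and used as a black box in Section~\ref{sec.formality}. So there is no ``paper's own proof'' to compare against; I can only evaluate your sketch on its own terms and against the standard obstruction-theoretic argument that underlies Manetti's result.

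Your overall plan is the right one and matches the philosophy of \cite{dglaformality}: encode the minimal $L_\infty$ model of $M$ as a Maurer--Cartan element in the Chevalley--Eilenberg complex of $H^*(M)$, transport the leading obstruction along $f$, and use injectivity of $H^2_{CE}(H_M,H_M)\to H^2_{CE}(H_M,H_L)$ to kill it inductively. The ``in particular'' clause is also handled correctly.

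There is, however, a genuine gap in the inductive step. You claim that at arity $k$ the $L_\infty$ morphism relation for $\psi$ reads $H^*(f)\circ\{\cdots\}^M_k=d_{CE}\,\psi_{k-1}$. This is only true for $k=3$. For $k\ge4$ the morphism relation of Definition~\ref{def.morfismo} at arity $k$ (with target strict and $\{\cdots\}^M_j=0$ for $3\le j<k$) is
\[
H^*(f)\circ\{\cdots\}^M_k \;=\; d_{CE}\,\psi_{k-1}\;+\;\frac{1}{2}\sum_{a=2}^{k-2}\sum_{\sigma\in S(a,k-a)}\pm\big\{\psi_a(\xi_{\sigma(1)},\ldots),\psi_{k-a}(\ldots,\xi_{\sigma(k)})\big\}\,,
\]
and the quadratic tail uses the Lie bracket on $H_L$, not merely its $H_M$-module structure. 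Nothing you have said forces $\psi_2,\ldots,\psi_{k-2}$ to vanish after the previous gauge steps (gauging $\gamma_M$ by a primitive of $\{\cdots\}^M_{k-1}$ modifies $\psi$ only from arity $k-1$ upward), so these terms are genuinely present. To conclude that $H^*(f)\circ\{\cdots\}^M_k$ is $d_{CE}$-exact you must show that the quadratic tail is itself a coboundary, and this is where the actual work lies: one either proves it directly from the lower-arity morphism relations $d_{CE}\psi_{n-1}=\tfrac12\sum_{a=2}^{n-2}[\psi_a,\psi_{n-a}]$ for $3\le n<k$ via a Jacobi-type cancellation, or one simultaneously normalises $\psi$ at each step so that $\psi_2,\ldots,\psi_{k-2}$ are killed (which requires checking that the truncated data extends to a full $L_\infty$ morphism). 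Either route is doable, but neither is the one-line ``extract the lowest-arity part'' you wrote; the difficulties you flag in your final paragraph (signs, well-definedness, convergence) are secondary to this point.
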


It should be noted that for $L=0$ the above theorem reduces to the classical criterion for intrinsic formality of graded Lie algebras.

\bigskip

\section{Cyclic and quasi-cyclic DG-Lie algebras}\label{section.qcyclicDGLA}

The general notion of cyclic (DG) algebra \cite{GK} specialised to DG-Lie algebras gives the following definition, see also \cite{LS}.

\begin{definition}\label{def.cyclic} 
Let $n$ be an integer. A \emph{cyclic DG-Lie algebra} $(L,d,[-,-],(-,-))$ of degree $n$ is a finite dimensional DG-Lie algebra $(L,d,[-,-])$ equipped with a degree $-n$ non-degenerate graded symmetric bilinear form $(-,-)\colon L^{\odot2}\to\K[-n]$ such 
that  
\[ (dx,y)=(-1)^{|x|+1}(x,dy),\qquad ([x,y],z) = (x,[y,z]),\qquad\forall\,x,y,z\in L\,. \]
\end{definition}
	
The condition $(dx,y)=\pm(x,dy)$ implies in particular that 
$d(L)^{\perp}=\ker(d)$; since $L$ is finite dimensional we have 
$d(L)=\ker(d)^{\perp}$ and this implies that also the induced bilinear form in the cohomology 
$H^*(L)$ is non-degenerate.

\begin{example}[Symplectic representations]
Let $(V,\omega)$ be a finite dimensional symplectic vector space and let $\mathfrak{g}$ be a finite dimensional Lie algebra. Recall that a left action 
\[ \mathfrak{g}\times V\to V,\qquad (g,v)\mapsto gv,\]
is called symplectic if for every $v,w\in V$ and every $g\in\mathfrak{g}$  we have
\[\omega(gv,w)+\omega(v,gw)=0\,.\]
There exists a natural correspondence between (isomorphism classes of) symplectic representations and (isomorphism classes of) cyclic DG-Lie algebras of degree 2 with trivial differential and without elements of negative degree:
given a symplectic action as above consider the graded Lie algebra $L=H(L)=L^0\oplus L^1\oplus L^2$ whose cyclic Lie structure is defined as follows.

\begin{enumerate}
\item $L^0=\mathfrak{g}$, $L^1=V$, and $L^2=\mathfrak{g}^{\vee}=\Hom_{\K}(\mathfrak{g},\K)$.

\item The Lie bracket is defined by
\begin{itemize}
\item $[g,v]=gv$ for every $g\in L^0$, $v\in L^1$,
\item $[v,w] \colon h\mapsto \omega(hv,w)$ for every $v,w\in L^1$, $h\in \mathfrak{g}$,
\item $[g,y]\colon h\mapsto y([h,g]_{\mathfrak{g}})$ for every $g,h\in\mathfrak{g}$, $y\in\mathfrak{g}^{\vee}$.
\end{itemize}

\item The pairing is defined by
\begin{itemize} 
\item $(-,-)\colon L^0\times L^2\to \K$ is the natural pairing,
\item $(v,w)=\omega(v,w)$ for every $v,w\in L^1$.
\end{itemize}
\end{enumerate}
The relations below easily follow from the above conditions:
\[ (h,[g,y]_L)=([h,g]_{\mathfrak{g}},y) \qquad \mbox{ for every } h,g\in L^0,\, y\in L^2\, , \]
\[ (g,[v,w]_L)=\omega(gv,w)=\omega([g,v]_L,w) \qquad \mbox{ for every } g\in L^0,\, v,w\in L^1\, . \]
Moreover, the equalities
\[ \omega(gv,w) + \omega(v,gw) = (gv, w) -\omega(gw,v) = ([g,v]_L,w) - (g,[v,w]_L) \qquad \mbox{ for every } g\in\mathfrak{g}, \, v,w\in L^1 \]
show that the \emph{symplectic condition} $\omega(gv,w)+\omega(v,gw)=0$ is equivalent to the \emph{cyclicity condition} $(g,[v,w])=([g,v],w)$.
The proof that the above defined data $(L,0,[-,-],(-,-))$ provides an example of a cyclic DG-Lie algebra of degree 2 is now straightforward. 

Notice that the Maurer-Cartan functional  $\frac{1}{2}[v,v]$ coincides by definition with the moment map $\mu\colon V\to \mathfrak{g}^{\vee}$ of the symplectic representation. 
\end{example}

\begin{example}\label{ex.nocontraction} 
Consider the following complex of vector spaces in degrees 0,1,2:
\[ L\colon\qquad \Span(a,b)\xrightarrow{d} \Span(x,y,p,db)\xrightarrow{d}\Span(z,dp)\]
equipped with 
the bilinear form $(-,-)\colon L^{\odot 2}\to \K[-2]$,  where the only nontrivial products between basis vectors are:
\[ (x,y)=-(y,x)=-1,\quad (db,p)=-(p,db)=-1,\quad (a,z)=(z,a)=1,\quad (b,dp)=(dp,b)=1\,.\]
Next consider the bracket $[-,-]\colon L^{\wedge 2}\to L$,  
 where the only nontrivial brackets between basis vectors are:
\[ [a,x]=db,\quad [a,p]=y,\quad [x,x]=dp,\quad [p,x]=z,\quad [b,x]=y\,.\]
The next proposition summarises the properties or the above example that are relevant for this paper.  

\begin{proposition}\label{prop.esempiononformale} 
In the above setup:
\begin{enumerate}

\item $L$ is a cyclic DG-Lie algebra of degree 2;

\item $L$ is not a formal DG-Lie algebra;

\item  there does not exist any splitting  $L=H\oplus d(K)\oplus K$ 
such that $[H^0,H^1]\subset H^1$.

\end{enumerate}
\end{proposition}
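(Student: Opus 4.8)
\textbf{Proof proposal for Proposition~\ref{prop.esempiononformale}.}

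The plan is to verify the three statements in turn, with most of the effort concentrated on (1) and (2). For part (1), I would first check the DG-Lie axioms for $(L,d,[-,-])$: that $d^2=0$ is immediate from the grading (the only composable differentials are $d(a)$ landing in degree $1$ and then being differentiated, so one must check that $d^2 a$ and $d^2 b$ vanish, which amounts to the stated $d$ being well-defined on the given bases), that the bracket is graded skew-symmetric (built into the notation $[-,-]\colon L^{\wedge 2}\to L$ together with $[x,x]=dp$ living in the odd-odd case where the symmetric square is used), that $d$ is a derivation of $[-,-]$, and that the graded Jacobi identity holds — the latter being a finite check on triples of basis vectors, where the only potentially nonzero iterated brackets involve $a$, $x$, $p$, $b$ and land in degree $2$, so nested brackets vanish for degree reasons except in a handful of cases. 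Then I would verify the cyclicity identities $(dx,y)=(-1)^{|x|+1}(x,dy)$ and $([x,y],z)=(x,[y,z])$ on basis vectors, and finally observe that the induced form on $H^*(L)$ is non-degenerate — in fact, as noted right after Definition~\ref{def.cyclic}, non-degeneracy of the bilinear form on $L$ together with the derivation property of $d$ automatically forces non-degeneracy in cohomology, so it suffices to confirm that the given pairing on $L$ is non-degenerate, which is visible from its matrix in the given basis.

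For part (2), the strategy is to exhibit a nontrivial Massey-type obstruction, i.e. a nonvanishing triple product in the minimal $L_\infty$ model. Concretely, I would compute the relevant cohomology classes: $H^1(L)$ and $H^2(L)$ from the complex $\Span(a,b)\to\Span(x,y,p,db)\to\Span(z,dp)$, identifying which of $x,y,p$ survive to cohomology and which become boundaries, then compute $\{-,-\}_2$ and, crucially, $\{-,-,-\}_3$ (or the relevant higher bracket) on appropriate classes using the transfer formulas \eqref{equ.transfer1}–\eqref{equ.transfer2} for some chosen splitting. The brackets $[a,x]=db$, $[a,p]=y$, $[x,x]=dp$, $[p,x]=z$, $[b,x]=y$ are rigged precisely so that $[a,x]$ and $[x,x]$ are exact while a secondary operation detects $z$ or $dp$; the key point is that this $L_\infty$-triple-product is a genuine invariant (independent of the splitting up to the usual indeterminacy) and is nonzero, so no span of $L_\infty$ weak equivalences to $H^*(L)$ with vanishing higher brackets can exist. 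Since formality is equivalent to the existence of the span \eqref{equ.span}, and $H^*(L)$ has vanishing higher operations, this nonvanishing triple product obstructs formality. I expect this to be the main obstacle: one must choose the splitting carefully, carry out the transfer computation without sign errors, and argue that the resulting class is really a nonzero homotopy invariant rather than an artifact of the chosen complement.

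For part (3), the cleanest route is by contradiction using (2): suppose a splitting $L=H\oplus d(K)\oplus K$ exists with $[H^0,H^1]\subset H^1$. The hypothesis that $H^0$ is a Lie subalgebra and that $H^i$, $K^i$ are $H^0$-submodules would need to be checked, but the essential additional input is $[H^0,H^1]\subset H^1$; combined with the cyclic structure of degree $n=2\le 2$, this is exactly the package of hypotheses (1)–(3) of Theorem~\ref{thm.formalityintro} (condition (1) there, $H^i=0$ for $i<0$, holds trivially here since $L$ is concentrated in degrees $0,1,2$; condition (2) is that $H^0$ is bracket-closed; condition (3) is the submodule property, which for the bracket $[H^0,H^1]$ is precisely the stated inclusion, and for $[H^0,H^2]$, $[H^0,K^i]$ follows similarly or for degree reasons). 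Therefore Theorem~\ref{thm.formalityintro} would force $L$ to be formal, contradicting part (2). Hence no such splitting exists. The only subtlety is to make sure that the minimal hypothesis "$[H^0,H^1]\subset H^1$" stated in the proposition really does, together with the ambient cyclic structure, entail all of (1)–(3) of the theorem — so I would spell out that in this low-dimensional example the remaining submodule conditions are automatic or reduce to the same inclusion.
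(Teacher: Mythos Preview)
Your plan for (1) and (2) matches the paper. For (2) the paper is more concise than your sketch: it simply observes that the triple Massey power of the class of $x$ is nontrivial, since $[x,x]=dp$ is exact and $[p,x]=z$ represents a nonzero class in $H^2(L)$.

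For (3), however, your indirect route via Theorem~\ref{thm.formality} has a genuine gap. That theorem needs not only $[H^0,H^1]\subset H^1$ but also $[H^0,K^1]\subset K^1$, and your claim that the remaining submodule conditions ``follow similarly or for degree reasons'' is false in this example. Any complement $K^1$ of $Z^1(L)=\Span(x,y,db)$ in $L^1$ is spanned by a vector of the form $p+\lambda x+\mu y+\nu\,db$, and then
\[
[a,\,p+\lambda x+\mu y+\nu\,db]=y+\lambda\,db
\]
has zero $p$-component, hence lies in $Z^1(L)$ and never in $K^1$. Thus \emph{no} splitting of $L$ has $K^1$ an $H^0$-submodule, so Theorem~\ref{thm.formality} can never be invoked here, and your contradiction does not go through. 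Note also that even if it did, it would only show that no splitting satisfies \emph{all} of (1)--(3) of Theorem~\ref{thm.formality}, which is a priori weaker than the stated item~(3).

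The paper's argument is instead a direct two-line computation: necessarily $H^0=\Span(a)$, and any $H^1$ (being a complement of $\Span(db)$ inside $Z^1(L)$) contains a representative $x+\alpha\,db$ of the class of $x$; then $[a,x+\alpha\,db]=[a,x]=db$, which lies in $d(K^0)$ and hence not in $H^1$. This is both simpler and avoids appealing to the paper's main theorem in order to verify an example meant to illustrate the necessity of that theorem's hypotheses.
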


\begin{proof} The first item is a tedious but straightforward computation. For the second item we observe that the triple Massey power of $x$ is nontrivial since
$dp=[x,x]$ and $[p,x]=z$. The third item is clear since  for every splitting there exists $\alpha\in \K$ such that 
$x+\alpha db\in H^1$  and therefore
$[a,x+\alpha db]=[a,x]=db\not\in H^1$. 
\end{proof}
\end{example}

\begin{example}\label{ex.noformaldimension3} 
Consider the following complex of vector spaces in degrees 1,2:
\[ L:\qquad \Span(a,b)\xrightarrow{d} \Span(x,db)\]
equipped with 
the closed bilinear form $(-,-)\colon L^{\odot 2}\to \K[-3]$,  where the only nontrivial products between basis vectors are $(a,x)=(b,db)=1$.
Next consider the bracket $[-,-]\colon L^{\wedge 2}\to L$,  
where the only nontrivial brackets between basis vectors are $[a,a]=db$, $[a,b]=x$.
The same argument used in the proof of Proposition~\ref{prop.esempiononformale} shows that $L$ is a cyclic non-formal DG-Lie algebra of degree 3. 
\end{example}

It is useful to enlarge the class of cyclic DG-Lie algebras by removing the assumption that
$L$ is finite dimensional, which is not satisfied in most geometrical situations. 
The same weakening of assumption was considered by Kontsevich \cite{K94} in the associative case.

\begin{definition}\label{def.qcyclicdgla}
A \emph{quasi-cyclic DG Lie algebra} $(L,d,[-,-],(-,-))$ of degree $n$ is a DG-Lie algebra $(L,d,[-,-])$ with finite dimensional cohomology, together with a degree $-n$ symmetric bilinear form $(-,-)\colon L^{\odot2}\to\K[-n]$ which satisfies 
\[ (dx,y)=(-1)^{|x|+1}(x,dy),\qquad ([x,y],z) = (x,[y,z]),\qquad\forall\,x,y,z\in L\,. \]
and such that the induced form $(-,-)\colon H(L)^{\odot2} \to\K[-n]$ is non-degenerate.
\end{definition}

If $(L,d,[-,-],(-,-))$ is a quasi-cyclic DG-Lie algebra then its cohomology $H^*(L)$ is naturally endowed with a structure of cyclic graded Lie algebra of the same degree.

\begin{example}[Vector bundles on manifolds with trivial canonical bundle]\label{ex.VBonK3}  
Let $\sE$ be a locally free  sheaf of a smooth complex projective manifold $X$ of dimension $n$ with trivial canonical bundle, and denote by $\omega_X$ be a 
holomorphic volume form. Then 
the Dolbeault complex 
\[ L=A^{0,*}_X(\HOM(\sE,\sE))\]
of the sheaf of endomorphisms of $\sE$ is a quasi-cyclic DG-Lie algebra of degree $n$, where
\[ (f,g)=\int_X \omega_X\wedge \Tr(fg)\,.\]
We have $H^i(L)=\Ext^i_X(\sE,\sE)$ and by Serre duality the induced pairing 
\[(-,-)\colon \Ext^i_X(\sE,\sE)\times \Ext^{n-i}_X(\sE,\sE)\to \C\] 
is non-degenerate. In Section~\ref{sec.formality} we extend this construction to coherent sheaves. 
\end{example}

We are now ready to state one of the main results of this paper, namely a 
sufficient condition for formality of quasi-cyclic DG-Lie algebras of degree $\le 2$. 
		
\begin{theorem}\label{thm.formality} 
	Let $(L,d,[-,-],(-,-))$ be a quasi-cyclic DG-Lie algebra of degree $n\le 2$. Assume that  there exists a splitting $L=H\oplus d(K)\oplus K$ such that:
		\begin{enumerate} 
		
		\item $H^i=0$ for $i<0$ (and hence also $H^i=0$ for $i>n$);\smallskip
		
		\item\label{item:hyp1} $H^0\subset L^0$ is closed with respect to the bracket $[-,-]$;\smallskip
			
		\item\label{item:hyp2} $H^i,K^i\subset L^i$ are $H^0$-submodules (with respect to the adjoint action) for all $i>0$;
			
		\end{enumerate}
		Then the DG-Lie algebra $(L,d,[-,-])$ is formal.
		
	\end{theorem}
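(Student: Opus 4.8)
The plan is to build an explicit $L_\infty$ minimal model $\imath\colon H\to L$ from the given splitting via the homological perturbation formulas \eqref{equ.transfer1}--\eqref{equ.transfer2}, and then show that the transferred brackets $\{\cdots\}_p$ on $H$ vanish for all $p\ge 3$. If this holds, then $(H,0,\{\cdot\cdot\}_2)$ is a DG-Lie algebra isomorphic to $H^*(L)$, and $\imath$ together with the identity on $H$ gives the span \eqref{equ.span}, proving formality. So the whole problem reduces to the vanishing of the higher transferred operations.

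\textbf{Main steps.} First I would record the degree constraints: since $H$ is concentrated in degrees $0,1,\dots,n$ with $n\le 2$, the transferred bracket $\{\xi_1,\dots,\xi_p\}_p$ has degree $2-p$, so it can only be nonzero when $2-p+\sum|\xi_i|$ lies in $\{0,\dots,n\}$; for $p\ge 3$ this forces most of the $\xi_i$ to have high degree and already kills many components. The surviving potential obstructions for $n=2$ are the operations landing in $H^0$ with all inputs in $H^1$ (for $p=2$ only — higher $p$ would need negative-degree inputs), in $H^1$ with inputs in $H^0,H^1$, and in $H^2$. The genuinely dangerous case is $\{\xi_1,\dots,\xi_p\}_p\in H^2$ with, say, two inputs in $H^1$ and the remaining $p-2$ inputs in $H^0$ (and the variants with one input in $H^2$ and the rest in $H^0$). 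Here hypotheses \eqref{item:hyp1} and \eqref{item:hyp2} enter: the adjoint action of $H^0$ preserves $H^1$, $K^1$, $H^2$, $K^2$ and (being a Lie action of $H^0$, which is closed under bracket) commutes with the contraction $h$ and the projection $\pi$ in the relevant degrees, so in the recursion \eqref{equ.transfer1} the $\imath_k$'s with $H^0$-inputs simply act by the adjoint action, and one reduces to showing $\{\xi,\eta,\dots\}$ vanishes when \emph{all} inputs lie in $H^1$. For those, the cyclic structure is the crucial tool: using $([x,y],z)=(x,[y,z])$, the pairing of $\{\xi_1,\dots,\xi_p\}_p$ against an element of $H^0$ can be rewritten, via the contraction identities $\pi = \imath_1\pi$, $dh+hd = \imath_1\pi - \mathrm{id}$ and the adjointness $(dx,y)=\pm(x,dy)$, as a totally (anti)symmetric expression in $p+1$ elements of $H^1$; but a graded-symmetric/skew form forces such an expression to vanish for parity reasons once $p+1$ is large enough, or more precisely one shows it equals $\pm$ itself with opposite sign. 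I would carry this out by the standard cyclic-invariance argument: show the transferred $L_\infty$ structure on $H$ is again cyclic of degree $n$ (this is a general fact about homological perturbation when the contraction data is compatible with the pairing, which one can arrange since $H\oplus d(K)\oplus K$ can be chosen self-orthogonal appropriately — or one simply checks $h$ is the adjoint of $-h^*$ up to sign), and then a cyclic minimal $L_\infty$ structure on a graded space concentrated in degrees $0,1,2$ with the pairing between $H^0$ and $H^2$ and a skew form on $H^1$ has all $\{\cdots\}_p=0$ for $p\ge 3$ by a direct degree/symmetry count.

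\textbf{The main obstacle.} The hard part is the case $p\ge 3$ with output in $H^2$ and inputs of mixed degree in $H^0$ and $H^1$ (and symmetrically one $H^2$-input): the degree bookkeeping alone does not kill it, and one must genuinely exploit that $H^0$ acts by derivations preserving the splitting \emph{together with} the cyclic identity to fold the $H^0$-inputs into an adjoint action and then invoke skew-symmetry of the pairing on $H^1$. Making the "transferred structure is cyclic" step precise — i.e. checking that the perturbation data $(\imath_1,\pi,h)$ respects the pairing, or else absorbing the failure — is where the argument is most delicate, and Examples~\ref{ex.nocontraction} and \ref{ex.noformaldimension3} show it is exactly here that the hypotheses \eqref{item:hyp1}, \eqref{item:hyp2} and the bound $n\le 2$ cannot be dropped. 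I would isolate this as a lemma on cyclic minimal $L_\infty$ structures in amplitude $[0,n]$ with $n\le 2$, prove the vanishing of higher operations there by the symmetry argument above, and then feed in the compatibility of the transfer with the cyclic form as the second ingredient.
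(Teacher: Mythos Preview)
Your plan hinges on showing that the transferred brackets $\{\cdots\}_p$ on $H$ vanish for all $p\ge3$, and you justify this by asserting that the transferred minimal model is again cyclic of degree $n$, after which a cyclicity-plus-symmetry count (combined with the vanishing of $H^0$-input operations) would finish the job. The gap is precisely the cyclicity of the transferred structure. In the \emph{quasi}-cyclic setting the pairing on $L$ is only nondegenerate in cohomology; on $K\oplus d(K)$ it can be highly degenerate, so there is no reason the homotopy $h$ can be made skew-self-adjoint with respect to $(-,-)$, and the standard ``cyclic homotopy transfer'' theorem does not apply. The paper does arrange $H\perp K$, which yields $(\pi(l),x)=(l,\imath_1(x))$ and $(h(l),\imath_1(x))=0$, but this says nothing about $(h(l),m)$ for general $l,m\in L$, and that is exactly what your argument would require. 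The paper is explicit on this point: immediately after proving $\{\cdots\}_3=0$ it states that ``in general the higher brackets $\{\cdots\}_p$ will not vanish for $p\ge4$''. So your central vanishing claim is false under the given hypotheses, and the proposed ``direct degree/symmetry count'' cannot succeed.

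What the paper actually does is accept that the transferred brackets $\{\xi_1,\ldots,\xi_p\}_p\in H^2$ with all $\xi_i\in H^1$ survive for $p\ge4$, and then construct by hand an $L_\infty$ \emph{isomorphism} $f\colon(H,0,\{-,-\},0,\{\cdots\}_4,\ldots)\to(H^*(L),0,\{-,-\},0,0,\ldots)$ with $f_1=\mathrm{id}$. The higher Taylor coefficients $f_p\colon(H^1)^{\odot p}\to H^1$ are defined recursively through the nondegenerate pairing on $H$: one sets $(f_p(\xi_1,\ldots,\xi_p),\xi_{p+1})$ equal to a specific shuffle-sum of terms $(\imath_j(\ldots),\imath_{p+1-j}(\ldots))-(f_j(\ldots),f_{p+1-j}(\ldots))$. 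The cyclic identity on $L$ together with the $H^0$-equivariance of $\imath_p$ (your observation that $H^0$-inputs fold into an adjoint action is correct and is used here) then lets one verify the two nontrivial $L_\infty$-morphism relations. So the cyclic form is indeed the key tool, but it is used to \emph{define the correction} $f_p$ rather than to force the transferred brackets to vanish.
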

		
For $n\le 0$ the above theorem is trivial since $H^i=0$ for every $i>0$ and then 
the embedding $H^0\to L$ is a quasi-isomorphism of DG-Lie algebras.   		
The next Section~\ref{sec.formalityforcyclicDGLA} will be entirely devoted to the  
(long) proof in the case $n=2$, whose first step also provides a complete proof for $n=1$. 
Examples~\ref{ex.noformaldimension3} and 
\ref{ex.nocontraction} show that formality fails if either $n>2$ or   
without the assumption \eqref{item:hyp2}, even for cyclic DG-Lie algebras.

\bigskip	
\section{Proof of Theorem~\ref{thm.formality}}	
\label{sec.formalityforcyclicDGLA}

Let $(L,d,[-,-],(-,-))$ be as in  Theorem~\ref{thm.formality}. If $A,B$ are two subsets of $L$ we shall write $A\perp B$ if $(x,y)=0$ for every $x\in A$, $y\in B$. For instance, it follows immediately from the relation $(dx,y)=(-1)^{|x|+1}(x,dy)$ that $d(K)\perp d(K)$ and $H\perp d(K)$.

\begin{lemma}\label{lem.semiorto} 
Up to a possible restriction to a quasi-isomorphic DG-Lie subalgebra of $L$ we may 
assume that the splitting $L=H\oplus d(K)\oplus K$ satisfies the following conditions:
		\begin{enumerate} 
		
		\item $H^i=0$ for $i<0$;
		
		\item\label{item:hyp3} $H^0\subset L^0$ is closed with respect to the bracket $[-,-]$;
		
		\item\label{item:hyp4} $H^i,K^i\subset L^i$ are $H^0$-submodules (with respect to the adjoint action) for all $i\in \Z$;
		
		\item\label{item:hyp5} $H\perp K$.
		\end{enumerate}
		\end{lemma}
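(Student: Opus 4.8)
The strategy is to modify the given splitting in two stages. First I would observe that hypotheses (1)--(3) already hold; the only thing to gain is condition \eqref{item:hyp5}, i.e. $H\perp K$, and along the way the $H^0$-module condition should be upgraded from $i>0$ to all $i\in\Z$ (for degrees $i<0$ this is vacuous since $H^i=0$, but $K^i$ in negative degrees and $K^0, H^0$ need care). The passage to a quasi-isomorphic DG-Lie subalgebra is used to discard the part of $L$ in non-positive degrees that is not $H^0$; concretely, since $H^i=0$ for $i<0$ one has $L^i=d(K^{i-1})\oplus K^i$ for $i<0$ and $L^{-1}=d(K^{-2})\oplus K^{-1}$ with $d|_{K^{-1}}$ injective into $L^0$. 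The subalgebra I want is the one generated in a minimal way keeping $H^0$ and killing the acyclic tail in degrees $\le 0$; one checks it is a DG-Lie subalgebra using that $H^0$ is a subalgebra and the $H^0$-module structure, and that the inclusion is a quasi-isomorphism because we only removed an acyclic subcomplex. After this reduction we may assume $L^i=0$ for $i<0$ and $L^0 = H^0 \oplus d(K^{-1}) \oplus K^0$ collapses (no $K^{-1}$), so $L^0=H^0$; then conditions (1)--(3) persist and condition on $H^0$-modules holds trivially in all degrees $\le 0$.

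\textbf{Achieving $H\perp K$.} With $L^0=H^0$ a Lie subalgebra acting on each $L^i$, the key point is that $H^i$ and $K^i$ are $H^0$-submodules, hence so is $d(K^{i-1})$, and the pairing is $H^0$-invariant in the graded sense: $(hx,y)+(-1)^{|h||x|}(x,hy)=0$ for $h\in H^0$ — this follows from cyclicity $([h,x],y)=(h,[x,y])$ together with graded symmetry, since $(h,[x,y])=(h,[x,y])$ and unwinding gives $([h,x],y)=-(-1)^{\cdots}(x,[h,y])$. Now for the degree reasons: the pairing sends $L^i\odot L^{n-i}$ to $\K$, and $H\perp d(K)$ already; I want to kill $(H^i, K^{n-i})$. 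The obstruction is that a given $K^{n-i}$ may not be $(-,-)$-orthogonal to $H^i$. Fix this by replacing $K^{n-i}$ with $K^{n-i} + \phi(K^{n-i})$ for a suitable $H^0$-equivariant linear map $\phi\colon K^{n-i}\to d(K)^{\,?}$... more precisely: since $d(K)\perp d(K)$ and the pairing restricted to $d(K^{i-1})\times K^{n-i}$ (both in complementary degrees summing to $n$, using $d(K^{i-1})\subset L^i$ and that $d(K)$ pairs nontrivially only with... ) — here one uses that on $d(K)\oplus K$ the form is a "hyperbolic" pairing between $d(K)$ and $K$, so one can Gram--Schmidt $K$ against $H$ using $d(K)$ as the complementary isotropic space. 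Because everything in sight is an $H^0$-module and $\K$ has characteristic zero, this Gram--Schmidt correction can be performed $H^0$-equivariantly (averaging is not needed; one just notes the relevant maps are $H^0$-module maps and invertibility is preserved). The new $K$ still satisfies $d|_K$ injective and $L=H\oplus d(K)\oplus K$, and now $H\perp K$; conditions (1)--(3) are unchanged since we only altered $K$ by adding $H^0$-submodules of $d(K)$.

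\textbf{Main obstacle.} The delicate step is the $H^0$-equivariant Gram--Schmidt: I must check that the correction term landing $K$ inside $H^\perp$ can be chosen to respect the adjoint $H^0$-action, and simultaneously that it does not spoil the injectivity of $d$ on $K$ nor the directness of the sum. The cleanest route is to argue at the level of the subquotient: the pairing induces a perfect pairing between $H^*(L)$ and itself (non-degeneracy in cohomology) and a perfect pairing between $d(K)$ and $K$ (since $d(L)=d(K)^\perp{}^\perp$ type arguments from finite-dimensionality of cohomology — but $L$ is infinite-dimensional, so one must be slightly careful and work degree by degree, or pass to the finite-dimensional harmonic-type model). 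I expect the bulk of the write-up to be verifying that on each graded piece the bilinear form decomposes compatibly with some $H^0$-invariant splitting, which is where the characteristic-zero and module-theoretic hypotheses \eqref{item:hyp1}--\eqref{item:hyp2} are really used; once that linear-algebra-over-$H^0$ statement is in hand, redefining $K$ and re-checking (1)--(4) is routine.
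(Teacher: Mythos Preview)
Your first reduction---truncating $L$ to the DG-Lie subalgebra concentrated in non-negative degrees with degree-$0$ part exactly $H^0$---is correct and matches the paper's first move. Once you write down the subalgebra
\[
H^0\;\oplus\;(H^1\oplus K^1)\;\oplus\;(H^2\oplus d(K^1)\oplus K^2)\;\oplus\;\cdots
\]
the verification that it is closed under bracket and differential, and that the inclusion is a quasi-isomorphism, is straightforward from hypotheses \eqref{item:hyp1} and \eqref{item:hyp2}. Your description is vaguer than it needs to be, but the idea is right.

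The second step, however, contains a real error. You propose to achieve $H\perp K$ by replacing $K$ with $K+\phi(K)$ for some $\phi\colon K\to d(K)$, using the ``hyperbolic'' pairing between $d(K)$ and $K$ to Gram--Schmidt $K$ against $H$. But this cannot work: you have already noted that $H\perp d(K)$, so for any $\eta\in H$ and $k\in K$,
\[
(\eta,\,k+\phi(k))=(\eta,k)+(\eta,\phi(k))=(\eta,k)+0=(\eta,k).
\]
Adding elements of $d(K)$ to $K$ does not change the pairing with $H$ at all; the obstruction $(H,K)$ survives unchanged. The ``complementary isotropic space'' $d(K)$ is isotropic \emph{against $H$}, which is exactly why it is useless for this purpose.

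The correct correction lives in $H$, not in $d(K)$. Since the induced pairing $H^i\times H^{n-i}\to\K$ is non-degenerate, for each $k\in K^i$ there is a unique $h_k\in H^i$ with $(h_k,y)=-(k,y)$ for all $y\in H^{n-i}$; set
\[
C^i=\{\,h+k\in H^i\oplus K^i\;:\;(h+k,y)=0\ \text{for all}\ y\in H^{n-i}\,\},
\]
which is the graph of $k\mapsto h_k$. Then $d(C^i)=d(K^i)$ (so $d|_{C}$ is still injective), $H^i\oplus C^i=H^i\oplus K^i$, and $H\perp C$ by construction. The $H^0$-equivariance of $C$ follows immediately from cyclicity: if $x\in C^i$, $a\in H^0$, $y\in H^{n-i}$, then $(y,[a,x])=([y,a],x)=0$ since $[y,a]\in H^{n-i}$. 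No averaging, no hyperbolic form on $d(K)\oplus K$, and no finite-dimensionality of $L$ is needed---only the non-degeneracy of the pairing on the finite-dimensional $H$.
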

	
\begin{proof} Since  $H^i=0$ for every $i<0$  
the  DG-Lie subalgebra 
\[ H^0\oplus (H^1\oplus K^1)\oplus  (H^2\oplus d(K^1)\oplus K^2)\oplus \cdots\;\]
is quasi-cyclic and quasi-isomorphic to $L$. This proves that, up to a possible 
restriction to a quasi-isomorphic DG-Lie subalgebra it is not restrictive to assume the validity of 
condition \eqref{item:hyp4}.
Next, for every integer $i$ consider the vector subspace 
\[ C^i=\{x\in H^i\oplus K^i\mid (x,y)=0\quad\forall\; y\in H^{n-i}\}.\]
Since $(-,-)\colon H^i\times H^{n-i}\to \K$ is a perfect pairing, the map
\[ \begin{aligned}
H^i\oplus C^i &\to H^i\oplus K^i \\
(h_1,h_2,k) &\mapsto (h_1-h_2,k)
\end{aligned} \]
is an isomorphism.
If $x\in C^i$ and $a\in H^0$, then $[a,x]\in H^i\oplus K^i$;  for every
$y\in H^{n-i}$ we have $(y,[a,x])=([y,a],x)=0$ and therefore $[a,x]\in C^i$. Finally, replacing 
$K^i$ with $C^i$ we may assume $H\perp K$. 

For later use it should  be pointed out that  the 
non-degeneneracy of $(-,-)\colon H^{\odot 2}\to \K$ immediately implies 
\begin{equation}\label{equ.hyp6}
x\in K\oplus d(K)\iff (x,y)=0\quad\text{for every}\quad y\in H\,.
\end{equation}
\end{proof}

From now on we assume that $(L,d,[-,-],(-,-))$ is a quasi-cyclic DG-Lie algebra of degree $n\le 2$ equipped with a splitting $L=H\oplus d(K)\oplus K$ satisfying 
the conditions of Lemma~\ref{lem.semiorto}.
The fist step is to use such a splitting  in order to produce a minimal $L_{\infty}$ model of 
$L$. Following the recipe described in Section~\ref{sec.review} we introduce the maps 
\[\imath_1\colon H\hookrightarrow L,\qquad \pi\colon L\twoheadrightarrow H,
\qquad h\colon L\twoheadrightarrow d(K) \xrightarrow{-d^{-1}} K \hookrightarrow L\,\] 
that satisfy the relations 
\[ (\imath_1(x),\imath_1(y)) = (x,y),\qquad (h(l),\imath_1(x)) = 0,\quad(\pi(l),x) = (l,\imath_1(x)),\qquad\forall\,x,y\in H,\;l\in L\,. \]
The first one is obvious and the second one follows from the orthogonality condition 
$H\perp K$. 
Since $\operatorname{Im}(\imath_1)=H$,   $\operatorname{Im}(h)=K$ and $H\perp d(K)\oplus K$,  the first two imply the third: 
\[(\pi(l),x)= (\imath_1\pi(l),\imath_1(x))=((\operatorname{id}_L+dh+hd)(l),\imath_1(x))=(l,\imath_1(x))\,.\]

The maps  $\imath_1,\pi,h$ induce via homotopy transfer a minimal  $L_\infty$-algebra structure on $H$, together with an $L_\infty$ quasi-isomorphism $\imath\colon H(L)\to L$ of $L_\infty$-algebras with linear part $\imath_1$. 
The quadratic components are given by 
\[ \imath_2(\xi_1,\xi_2)=h[\imath(\xi_1),\imath(\xi_2)],\qquad \{\xi_1,\xi_2\}_2=
\pi[\imath(\xi_1),\imath(\xi_2)]\,,\]
while the higher brackets $\{\cdots\}_p\colon H^{\wedge p}\to H[2-p]$ and the higher Taylor coefficients $\imath_p\colon H^{\wedge p}\to L[1-p]$, $p\ge2$, are explicitly (and recursively) defined by
\begin{equation}\label{eq:transfer1} \imath_p(\xi_1,\ldots,\xi_p) = \frac{1}{2}\sum_{k=1}^{p-1}\sum_{\sigma\in S(k,p-k)} \pm\, h[\imath_k(\xi_\sigma(1),\ldots),\imath_{p-k}(\ldots,\xi_{\sigma(p)})],
\end{equation}
\begin{equation}\label{eq:transfer2} \{\xi_1,\ldots,\xi_p\}_p = \frac{1}{2}\sum_{k=1}^{p-1}\sum_{\sigma\in S(k,p-k)} \pm\, \pi[\imath_k(\xi_\sigma(1),\ldots),\imath_{p-k}(\ldots,\xi_{\sigma(p)})],
\end{equation}
where $\pm$ is the appropriate Koszul sign described explicitly in \eqref{equ.transfer1} and \eqref{equ.transfer2}. These signs will simplify in our specific case, for instance $\pm1=+1$ whenever $\xi_i\in H^1$ for every $i$, and we don't need to make them explicit.

Notice that $\{a,b\}_2=[a,b]$ for $a,b\in H^0$ and under the natural isomorphism $H\cong H^*(L)$, the quadratic bracket $\{x,y\}_2=\pi[\imath_1(x),\imath_1(y)]$ on $H$ is just the bracket induced by $[-,-]$ in cohomology.

\begin{lemma}\label{lem.secondvanishing}
In the above setup, for every $p\ge 2$ and every $g\in H^0$ we have 
\begin{equation*} \imath_p(g,\ldots)=0,\qquad \{g,\ldots\}_{p+1}=0 \,.
\end{equation*}
\end{lemma}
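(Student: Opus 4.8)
The plan is to exploit the $H^0$-equivariance of the homotopy-transfer construction together with the elementary facts $\operatorname{Im}(\imath_1)=H$ and $\operatorname{Im}(\imath_p)\subseteq\operatorname{Im}(h)=K$ for every $p\ge2$. Write $\mf g=H^0$. By Lemma~\ref{lem.semiorto} the decomposition $L=H\oplus d(K)\oplus K$ is a decomposition of $\mf g$-modules for the adjoint action: $H$ and $K$ are $\mf g$-submodules by conditions \eqref{item:hyp3} and \eqref{item:hyp4}, and $d(K)$ is one because $d$ is $\mf g$-linear ($dg=0$, so $[g,dk]=d[g,k]\in d(K)$). Consequently $\imath_1$, $\pi$ and $h$ are $\mf g$-equivariant, and one has $H\subseteq\ker h$, $K\subseteq\ker h$, $\pi(K)=0$. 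Since the right-hand side of \eqref{eq:transfer1} consists only of terms of the form $h[\,\cdot\,,\,\cdot\,]$, it is immediate that $\imath_p(H^{\wedge p})\subseteq K$ for all $p\ge2$.

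The heart of the matter is an induction on $p$ proving $\imath_p(g,\xi_2,\ldots,\xi_p)=0$ for $g\in\mf g$ and $p\ge2$. The combinatorial point that makes it run is that, writing $g=\xi_1$, the index $1$ is minimal, so in any $(k,p-k)$-shuffle it is the \emph{first} entry of whichever of the two increasing blocks contains it; hence in each summand of \eqref{eq:transfer1} the vector $g$ occurs as the first argument either of $\imath_k$ or of $\imath_{p-k}$. If that block has length $\ge2$, the summand vanishes by the inductive hypothesis. If it has length $1$, the corresponding factor is literally $\imath_1(g)=g$ and the partner factor is $\imath_{p-1}(\ldots)$, which lies in $K$ when $p\ge3$ and equals $\imath_1(\xi_j)=\xi_j\in H$ when $p=2$; in either case the partner lies in a $\mf g$-submodule contained in $\ker h$, so $h$ of the bracket $[g,\,\cdot\,]$ (or $[\,\cdot\,,g]$) is $0$. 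Thus every summand vanishes and $\imath_p(g,\xi_2,\ldots,\xi_p)=0$; the base case $p=2$ is the same computation, $\imath_2(g,\xi)=h[g,\xi]$ with $[g,\xi]\in H$.

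Granting this, the vanishing of the higher brackets follows from the identical bookkeeping applied to \eqref{eq:transfer2}: in $\{g,\xi_2,\ldots,\xi_q\}_q$ with $q\ge3$, each summand $\pi[\imath_k(\ldots),\imath_{q-k}(\ldots)]$ either contains $g$ inside some $\imath_j$ with $j\ge2$ — hence vanishes by the step just proved — or has $g$ alone inside $\imath_1$, in which case the partner factor is $\imath_{q-1}(\ldots)\in K$ (here $q-1\ge2$), so $[g,\,\cdot\,]$ of it lies in $K=\ker\pi$ and the summand is again $0$. Therefore $\{g,\ldots\}_{p+1}=0$ for all $p\ge2$, which is the assertion. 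Note that the statement cannot be improved to $\{g,\ldots\}_2=0$, since $\{g,\xi\}_2=\pi[g,\xi]=[g,\xi]$ is the induced bracket in cohomology.

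I do not expect a genuine obstacle: the only delicate points are the shuffle observation pinning $g$ to the head of its block, and keeping track of which of $\operatorname{Im}(\imath_1)=H$ or $\operatorname{Im}(h)=K$ is the relevant $\mf g$-submodule at each step — both sit inside $\ker h$, so the case analysis closes uniformly. It is worth remarking that neither the (quasi-)cyclic pairing nor the orthogonality $H\perp K$ enters this lemma; only the $H^0$-module structure of the splitting is used.
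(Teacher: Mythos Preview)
Your proof is correct and follows essentially the same strategy as the paper's: both argue by induction on $p$, use that in each shuffle the element $g$ sits at an extremal position of its block (you put $g$ first and use minimality of the index, the paper puts $g$ last and uses maximality), kill the terms with $g$ inside some $\imath_j$, $j\ge2$, by the inductive hypothesis, and kill the remaining $\imath_1(g)$-terms via the $H^0$-invariance of $H$ (base case) and $K$ (inductive step), both contained in $\ker h\cap\ker\pi$. Your write-up is more explicit about the shuffle bookkeeping, and your closing remark that the cyclic pairing plays no role here is accurate.
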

\begin{proof}
If $g\in H^0$, then $[\imath_1(g),\imath_1(\xi)]\in H \subset \operatorname{Ker}(h)$ for all $\xi\in H$, since $H$ is a $H^0$-submodule of $L$, thus $\imath_2(g,\xi)=0$ for all $g\in H^0$ and $\xi\in H$. In general, by formulas \eqref{eq:transfer1}, \eqref{eq:transfer2} and induction on $p$, for all $p\ge2$, $g\in H^0$ and $\xi_1,\ldots,\xi_p\in H$, we have 
\[\{\xi_1,\ldots,\xi_p,g\}_{p+1}= \pm\pi[\imath_p(\xi_1,\ldots,\xi_p),\imath_1(g)],\qquad \imath_{p+1}(\xi_1,\ldots,\xi_p,g)=\pm h[\imath_p(\xi_1,\ldots,\xi_p),\imath_1(g)]\,. \]
Finally, notice that for $p\ge2$ we have $\operatorname{Im}(\imath_p)\subset K\subset \operatorname{Ker}(h)\bigcap \operatorname{Ker}(\pi)$, and that $K$ is by hypothesis a $H^0$-submodule of $L$. This implies that  $[\imath_p(\xi_1,\ldots,\xi_p),\imath_1(g)]\in K$ and therefore
\[\{\xi_1,\ldots,\xi_p,g\}_{p+1}=\imath_{p+1}(\xi_1,\ldots,\xi_p,g)=0\,.\] 
\end{proof}

Lemma~\ref{lem.secondvanishing} provides a complete proof of formality  for $n=1$, since 
$H^i=0$ for every $i\not=0,1$ and therefore by degree reasons $\{\cdots\}_{p+1}=0$ for every 
$p\ge 2$. From now on we assume that \emph{the degree of the quasi-cyclic DG-Lie algebra $L$ of Theorem~\ref{thm.formality} is  equal to $n=2$.}

\begin{lemma}\label{lem.firstvanishing} 
In the above situation,  for every $\xi_1,\ldots,\xi_p\in H^1$, $p\ge 3$, we have 
\begin{equation}\label{equ.triplevanish}
\pi[\imath_p(\xi_1,\ldots,\xi_{p-1}),\imath_1(\xi_{p})]=0,
\end{equation}
\begin{equation}\label{eq:transfer3} \{\xi_1,\ldots,\xi_p\}_p = \frac{1}{2}\sum_{k=2}^{p-2}\;\sum_{\sigma\in S(k,p-k)} \pi[\imath_k(\xi_\sigma(1),\ldots),\imath_{p-k}(\ldots,\xi_{\sigma(p)})]\,,
\end{equation}
and therefore $\{\xi_1,\xi_2,\xi_3\}_3=0$ for every $\xi_1,\xi_2,\xi_3\in H^1$. 
\end{lemma}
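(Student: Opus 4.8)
\textbf{Proof strategy for Lemma~\ref{lem.firstvanishing}.}

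The plan is to exploit the cyclic symmetry of the pairing together with the orthogonality relations for $\imath_1,\pi,h$ established above, reducing everything to a single ``integration by parts'' identity for the transferred $L_\infty$ structure. The key observation is that, by the relation $(\pi(l),x)=(l,\imath_1(x))$ together with $(\imath_1(x),\imath_1(y))=(x,y)$, proving \eqref{equ.triplevanish} is equivalent to showing $(\imath_p(\xi_1,\ldots,\xi_{p-1}),[\imath_1(\xi_{p-1+1}),\ldots]) = 0$ for a suitable rearrangement — more precisely, I would first translate $\pi[\imath_p(\ldots),\imath_1(\xi_p)]$ into the bilinear form: for any $\eta\in H^0$ (the only degree where the target could land, since $\imath_p$ lands in degree $2-p+\sum|\xi_i| = 2-p+(p-1)=1$, hence the bracket has degree $1+1=2$, so actually it lands in $H^2$ and must be paired against $H^0$), we have
\[
(\pi[\imath_p(\xi_1,\ldots,\xi_{p-1}),\imath_1(\xi_p)],\eta) = ([\imath_p(\xi_1,\ldots,\xi_{p-1}),\imath_1(\xi_p)],\imath_1(\eta)) = (\imath_p(\xi_1,\ldots,\xi_{p-1}),[\imath_1(\xi_p),\imath_1(\eta)]).
\]
Now $[\imath_1(\xi_p),\imath_1(\eta)] = -[\imath_1(\eta),\imath_1(\xi_p)]$, and since $H^1$ is an $H^0$-submodule this lies in $H^1\oplus d(K)$; but $\imath_p(\xi_1,\ldots,\xi_{p-1})\in K$ for $p\ge 2$ by the last remark in the proof of Lemma~\ref{lem.secondvanishing}, and $K\perp H$ while $K\perp d(K)$ is false in general — so instead I use \eqref{equ.hyp6}: $\imath_p(\ldots)\in K$ implies $(\imath_p(\ldots),y)=0$ for all $y\in H$. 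Writing $[\imath_1(\xi_p),\imath_1(\eta)] = h_0 + d(k)$ with $h_0\in H^1$, $k\in K$, the term $(\imath_p(\ldots),h_0)=0$ by orthogonality, and $(\imath_p(\ldots),d(k)) = \pm(d(\imath_p(\ldots)),k)$; here one substitutes the $L_\infty$-relation expressing $d\imath_p$ in terms of brackets of lower $\imath_k$'s and the transferred brackets $\{\cdots\}_j$, and the resulting terms are controlled inductively.

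Concretely, I would run an induction on $p$ simultaneously proving \eqref{equ.triplevanish} and the fact that the $k=1$ and $k=p-1$ terms drop out of \eqref{eq:transfer2}, which is exactly what upgrades \eqref{eq:transfer2} to \eqref{eq:transfer3} (the sum restricted to $2\le k\le p-2$). The base case $p=3$ of \eqref{eq:transfer3} forces the outer sum $\sum_{k=2}^{1}$ to be empty, giving $\{\xi_1,\xi_2,\xi_3\}_3=0$ immediately; so the real content is the inductive step, where \eqref{equ.triplevanish} for the current $p$ feeds into killing the extreme terms of the shuffle sum for $\{\xi_1,\ldots,\xi_{p+1}\}_{p+1}$, and then \eqref{eq:transfer3} at stage $p$ (together with $\{\xi_1,\xi_2,\xi_3\}_3=0$) is used to simplify $d\imath_p$ when verifying \eqref{equ.triplevanish} at stage $p+1$. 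Throughout, the degree bookkeeping is favorable: with all $\xi_i\in H^1$, each $\imath_k$ on $k$ arguments lands in degree $2-k+k=2$ for... wait, $|\imath_k(\xi_1,\ldots,\xi_k)| = (1-k) + k\cdot 1 = 1$, so $\imath_k$ of any number of $H^1$-elements lands in $L^1$, hence $\imath_k(\ldots)\in K^1$ for $k\ge 2$ and in $H^1$ for $k=1$; brackets of two such land in $L^2 = H^2\oplus d(K^1)\oplus K^2$, and $h$ of that is in $K^1$, $\pi$ of that is in $H^2$. This means in \eqref{eq:transfer1}–\eqref{eq:transfer2} every internal $\imath_k$ with $k\ge 2$ is an element of $K$, so the bracket $[\imath_k(\ldots),\imath_1(\xi)]$ with the linear term is a bracket of $K^1$ with $H^1$, landing in $L^2$; and the crucial point is that $K$ being an $H^0$-module is not directly applicable, so one genuinely needs the pairing argument above rather than a naive module-theoretic vanishing.

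\textbf{The main obstacle.} The hard part will be the inductive verification of \eqref{equ.triplevanish}: one must commute $d$ past $\imath_p$ using the $L_\infty$-morphism identities (Definition~\ref{def.morfismo} specialized to $\imath$), which expands $d\imath_p(\xi_1,\ldots,\xi_{p-1})$ into a sum of $\pm[\imath_j(\ldots),\imath_{p-1-j}(\ldots)]$ terms plus $\pm\imath_{p-j+1}(\{\ldots\}_j,\ldots)$ terms, and then one must show that pairing each of these against $k$ (where $d(k)$ appeared above) vanishes — the bracket terms by another application of cyclicity plus the inductive hypothesis that lower $\imath$'s land in $K$ and are orthogonal to $H$, and the $\imath(\{\cdots\}_j,\ldots)$ terms by the inductive form of \eqref{eq:transfer3}, since $\{\cdots\}_j$ for $3\le j\le p-1$ has already been shown (inductively) to be expressible without the dangerous extreme shuffle terms, and $\{\cdot\cdot\}_2$ lands in $H^2$ which kills the relevant $\imath$ by degree. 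Keeping the Koszul signs consistent across the décalage conventions of \eqref{equ.transfer1}–\eqref{equ.transfer2} while doing this is the bookkeeping bottleneck, but since all arguments are in $H^1$ (odd degree) most signs are $+1$, which is presumably why the authors restrict attention to $\xi_i\in H^1$ here before handling the mixed-degree case separately.
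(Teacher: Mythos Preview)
Your opening move is exactly right: pair the projection against $\eta\in H^0$ using $(\pi(l),x)=(l,\imath_1(x))$ and then apply cyclicity. But you then take a wrong turn. You write that $[\imath_1(\xi_p),\imath_1(\eta)]$ ``lies in $H^1\oplus d(K)$'' and proceed to decompose it as $h_0+d(k)$, which forces you into an elaborate induction involving $d\imath_p$ and the full $L_\infty$-morphism relations. This is unnecessary: the standing hypothesis (item~\eqref{item:hyp4} of Lemma~\ref{lem.semiorto}) is that the \emph{subspace} $H^1\subset L^1$ is an $H^0$-submodule, i.e.\ $[H^0,H^1]\subset H^1$ on the nose, not merely up to coboundaries. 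Hence $[\imath_1(\xi_p),\imath_1(\eta)]=-[\imath_1(\eta),\imath_1(\xi_p)]\in H^1$ with no $d(k)$ component at all. Since $\imath_{p-1}(\xi_1,\ldots,\xi_{p-1})\in K$ and $K\perp H$, the pairing vanishes immediately and you are done --- no induction, no $L_\infty$ relations, no $d\imath_p$.

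This is precisely the paper's argument, phrased as $(g,[x,y])=([g,x],y)=0$ for $g\in H^0$, $x\in H^1$, $y\in K^1$: one line once you recognise that $[g,x]$ already sits in $H^1$. Equation~\eqref{eq:transfer3} then follows because the $k=1$ and $k=p-1$ terms in \eqref{eq:transfer2} are exactly of this form (up to the symmetry $[a,b]=[b,a]$ for $a,b\in L^1$), and for $p=3$ the remaining sum is empty. Your inductive machinery would presumably also arrive at the answer, but it is fighting a problem that has already dissolved.
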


\begin{proof} It is sufficient to prove \eqref{equ.triplevanish}.
We first note that  the image of $\imath_1$ is contained in $H$ and the image of $\imath_j$ is contained in $K$ for every $j>1$. Moreover we can rewrite \eqref{equ.hyp6} in the form 
$\pi(x)=0$ iff $(x,y)=0$ for every $y\in H$: now it is sufficient to observe that for any 
$g\in H^0$, $x\in H^1$ and $y\in K^1$ we have 
$(g,[x,y])=([g,x],y)=0$ since $H^1$ is an $H^0$-module and $K^1$ is orthogonal to $H^1$.
\end{proof}

By degree reasons, Lemma~\ref{lem.secondvanishing} implies that for $p\ge2$ we have $\{\xi_1,\cdots,\xi_{p+1}\}_{p+1}=0$ unless $\xi_1,\ldots,\xi_{p+1}\in H^1$, and then by Lemma~\ref{lem.firstvanishing} we have $\{\cdots\}_3\equiv0$.
However, it should be noted that  in general the higher brackets $\{\cdots\}_p$ will not vanish 
for $p\ge4$ and therefore the proof of Theorem~\ref{thm.formality} is still very far to be concluded.

\textbf{Notation:} from now on we shall denote by $\mathfrak{g}$ the Lie algebra $(H^0,\{-,-\})$.\\

Now we notice that the item \eqref{item:hyp4} in the hypotheses of Lemma~\ref{lem.semiorto} implies that the maps $\imath\colon H\to L$, $\pi\colon L\to H$ and $h:L\to L[-1]$ are equivariant with respect to the induced $\mathfrak{g}$-module structures.

Since $\imath=(\imath_1,\imath_2,\ldots)$ is a morphism of $L_\infty$ algebras we have
\begin{multline*}\sum_{k=1}^{p}\sum_{\sigma\in S(p+2-k,k-1)} 
\pm \imath_k(\{\xi_{\sigma(1)},\cdots\}_{p+2-k},\ldots,\xi_{\sigma(p+1)}) = \\ 
= \pm d\imath_{p+1}(\xi_1,\ldots,\xi_{p+1})+\frac{1}{2}\sum_{j=1}^{p}\sum_{\sigma\in S(j,p+1-j)}\pm[  \imath_j(\xi_{\sigma(1)},\ldots),\imath_{p+1-j}(\ldots,\xi_{\sigma(p+1)}) ],
\end{multline*}
and taking $p\ge 2$, $\xi_1,\ldots,\xi_p\in H^1$, $\xi_{p+1}=g\in\mathfrak{g}$, by Lemma~\ref{lem.secondvanishing} the above expression reduces to
\begin{equation}\label{eq:equivariance_for_i}  [\imath_p(\xi_1,\ldots,\xi_p),\imath_1(g)] = \imath_p(\{\xi_1,g \}_2,\ldots,\xi_p)+\cdots+\imath_{p}(\xi_1,\ldots,\{\xi_p,g\}_2)\,. 
\end{equation}

Notice that the formula \eqref{eq:equivariance_for_i} is trivially satisfied  also for $p=1$. For later use it is useful to introduce, for every $0<j<p$ the  function
\[ I_j^p\colon (H^1)^{\odot j}\otimes (H^1)^{\odot p-j}\to \K,\quad 
I_j^p(\xi_1,\ldots,\xi_p)=(\imath_j(\xi_1,\ldots,\xi_j),\imath_{p-j}(\xi_{j+1},\ldots,\xi_p)).\]
Then for every $0<j<p$, $\xi_1,\ldots,\xi_p\in H^1$ and $g\in \mathfrak{g}$ we have 
\begin{equation}\label{equ.sommaI_j}
\sum_{i=1}^pI_j^p(\xi_1,\ldots,\{\xi_i,g\}_2,\ldots,\xi_p)=0\,.
\end{equation}
The proof of \eqref{equ.sommaI_j} is an immediate consequence of \eqref{eq:equivariance_for_i} together with the identity $([l_1,\imath_1(g)],l_2)+(l_1,[l_2,\imath_1(g)])=0$  for all $g\in\mathfrak{g}$, $l_1,l_2\in L^1$.  Moreover, the orthogonality condition $H\perp K$ implies that for every $p\ge 2$ we have $I^{p+1}_1=I^{p+1}_p=0$.

\medskip
\textbf{Notation:} 
we denote by $\{-,-\}\colon H^*(L)^{\wedge 2}\to H^*(L)$ the Lie bracket induced by the one 
$[-,-]\colon L^{\wedge2}\to L$ on $L$. We have already observed that via the natural identification $H=H^*(L)$ we have 
$\{-,-\}=\{\cdot\cdot\}_2$ and   
it is straightforward to check that it continues to satisfy $(\{x,y\},z)=(x,\{y,z\})$ for all $x,y,z\in H^*(L)$.\\

By homotopy classification of DG-Lie and $L_\infty$ algebras, in order to prove the formality of $L$  it is enough to exhibit an $L_\infty$ isomorphism
\[ f\colon(H,0,\{-,-\},0,\{\cdots\}_4,\{\cdots\}_5,\ldots)\to (H^*(L),0,\{-,-\},0,0,\ldots)  \]
between $H$ with the transferred $L_\infty$ algebra structure and $H^*(L)$ with the induced graded Lie algebra structure. Denoting by $f_p\colon H^{\wedge p}\to H[1-p]$ the Taylor coefficients of $f$, the necessary relations these have to satisfy in order for $f$ to be an $L_\infty$ morphism read
\begin{multline}\label{eq:Loo} \sum_{k=1}^{p}\sum_{\sigma\in S(p+2-k,k-1)} \pm f_k(\{\xi_{\sigma(1)},\cdots\},\ldots,\xi_{\sigma(p+1)}) = \\ = \frac{1}{2}\sum_{j=1}^{p}\sum_{\sigma\in S(j,p+1-j)}\pm\{ f_j(\xi_{\sigma(1)},\ldots),f_{p+1-j}(\ldots,\xi_{\sigma(p+1)}) \}
\end{multline}
for all $p\ge2$ and $\xi_1,\ldots,\xi_{p+1}\in H$. If these are satisfied, for $f$ to be an isomorphism of $L_\infty$ algebras it is necessary and sufficient that its linear part 
$f_1\colon H\to H$ is an isomorphism of graded spaces. We look for an $L_\infty$ isomorphism 
$f$ as above such that moreover $f_1=\operatorname{id}_H$ and $f_p(\xi_1,\ldots,\xi_p)=0$ for $p\ge2$ unless $p\ge 3$ and $\xi_1,\ldots,\xi_p\in H^1$. With these hypotheses, many of the previous relations \eqref{eq:Loo} become trivial, and the only non-trivial ones we are left to verify are 
\begin{equation}\label{eq:Loo1} \left\{ f_p(\xi_1,\ldots,\xi_p),g \right\} = 
f_p\left(\left\{ \xi_1,g\right\},\ldots,\xi_p\right)+\cdots +f_p\left(\xi_1,\ldots,\left\{\xi_p,g\right\}\right), 
\end{equation}
\begin{equation}\label{eq:Loo2} \left\{ \xi_1,\ldots,\xi_{p+1} \right\}_{p+1} = \frac{1}{2}\sum_{j=1}^{p}\sum_{\sigma\in S(j,p+1-j)}\left\{ f_j(\xi_{\sigma(1)},\ldots),f_{p+1-j}(\ldots,\xi_{\sigma(p+1)})\right\},
\end{equation}
for all $p\ge2$, $\xi_1,\ldots,\xi_{p+1}\in H^1$ and $g\in\mathfrak{g}$ (as in the case of transfer formulas, Koszul signs have disappeared since $|\xi_1|=\cdots=|\xi_{p+1}|=1$). 
Since $f_2=0$ by definition and we  already know that 
$\{\cdots\}_3=0$ the relations \eqref{eq:Loo1} and \eqref{eq:Loo2} are trivially satisfied 
for $p=2$.

For every $p\ge3$ and every $1<j<p$ we define recursively  the linear maps
\[f_p\colon(H^1)^{\odot p}\to H^1,\qquad 
F^{p+1}_j\colon (H^1)^{\odot j}\otimes (H^1)^{\odot p-j+1}\to \K\,,\]
by the formulas 
\begin{equation}
F^{p+1}_j(\xi_1,\ldots,\xi_{p+1})=(f_j(\xi_{1},\ldots,\xi_j),f_{p-j+1}(\xi_{j+1},\ldots,\xi_{p+1}))\,,
\end{equation} 
\begin{equation}\label{eq:Looformula}	\left(f_p(\xi_1,\ldots,\xi_p),\xi_{p+1}\right)=
\frac{1}{2} \sum_{j=2}^{p-1}\sum_{\sigma\in S(j,p-j)}   
(I^{p+1}_j-F^{p+1}_j)(\xi_{\sigma(1)},\ldots,\xi_{\sigma(p)},\xi_{p+1}).
\end{equation}
for all $\xi_1,\ldots,\xi_{p+1}\in H^1$. 
The validity of \eqref{eq:Loo1} is proved in the following lemma.
	
\begin{lemma} In the above situation, for every $p\ge 2$, every $1<j<p$, every 	
$\xi_1,\ldots,\xi_{p+1}\in H^1$ and every  $g\in\mathfrak{g}$ we have

\begin{equation}\label{equ.boo}
\sum_{i=1}^{p}f_{p}(\xi_1,\ldots,\{\xi_i,g\},\ldots,\xi_{p})=\{ f_p(\xi_1,\ldots,\xi_p),g \},
\end{equation}
\begin{equation}\label{equ.sommaF_j}
\sum_{i=1}^{p+1}F_j^{p+1}(\xi_1,\ldots,\{\xi_i,g\},\ldots,\xi_{p+1})=0\,.
\end{equation}
\end{lemma}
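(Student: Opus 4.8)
The plan is to prove both identities simultaneously by induction on $p$, using the recursive definitions of $f_p$ and $F_j^{p+1}$ together with the $\mathfrak{g}$-equivariance of the whole homotopy transfer package already established above. The crucial structural input is that all the maps $\imath_p$, $\pi$, $h$ are $\mathfrak{g}$-equivariant (by item~\eqref{item:hyp4} of Lemma~\ref{lem.semiorto}), and that the bracket $\{-,-\}$ on $H^*(L)$ is compatible with the pairing, i.e. $(\{x,g\},z)+(x,\{z,g\})=0$ for $g\in\mathfrak{g}$ and $x,z\in H^1$. The latter is exactly the infinitesimal symplectic condition on the $\mathfrak{g}$-module $H^1$, and it is what lets us move $\{-,g\}$ across the pairing in $I_j^{p+1}$ and $F_j^{p+1}$.

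\textbf{Step 1: the identity \eqref{equ.sommaF_j}.} Assuming inductively that \eqref{equ.boo} holds for all indices $<p$ (the base case $f_2=0$ being vacuous), we expand
\[
\sum_{i=1}^{p+1}F_j^{p+1}(\xi_1,\ldots,\{\xi_i,g\},\ldots,\xi_{p+1})
\]
by Leibniz: the terms with $i\le j$ collect to $(\sum_{i\le j}f_j(\ldots,\{\xi_i,g\},\ldots),\,f_{p-j+1}(\cdots))$, which by the inductive \eqref{equ.boo} equals $(\{f_j(\xi_1,\ldots,\xi_j),g\},\,f_{p-j+1}(\xi_{j+1},\ldots,\xi_{p+1}))$; similarly the terms with $i>j$ give $(f_j(\cdots),\,\{f_{p-j+1}(\xi_{j+1},\ldots,\xi_{p+1}),g\})$. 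These two add to zero by the pairing-compatibility of $\{-,g\}$ recalled above. Note this step needs \eqref{equ.boo} only in lower degrees, so it is legitimately available in the induction.

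\textbf{Step 2: the identity \eqref{equ.boo}.} Pair the left-hand side of \eqref{eq:Looformula} against an arbitrary $\xi_{p+1}\in H^1$ and use the defining formula: $\bigl(\sum_i f_p(\xi_1,\ldots,\{\xi_i,g\},\ldots,\xi_p),\xi_{p+1}\bigr)$ is a sum over $2\le j\le p-1$ and shuffles of terms $(I_j^{p+1}-F_j^{p+1})$ with one $\xi_i$ (for $i\le p$) replaced by $\{\xi_i,g\}$. Now add and subtract the missing term where $\xi_{p+1}$ is replaced by $\{\xi_{p+1},g\}$: the full symmetrized sum $\sum_{i=1}^{p+1}(\cdots)$ over all $p+1$ slots vanishes termwise, because $\sum_{i=1}^{p+1}I_j^{p+1}(\ldots,\{\xi_i,g\},\ldots)=0$ by \eqref{equ.sommaI_j} and $\sum_{i=1}^{p+1}F_j^{p+1}(\ldots,\{\xi_i,g\},\ldots)=0$ by Step~1. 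Hence $\bigl(\sum_{i=1}^{p}f_p(\ldots,\{\xi_i,g\},\ldots),\xi_{p+1}\bigr)$ equals minus the single leftover term, which after re-reading \eqref{eq:Looformula} is precisely $-\bigl(f_p(\xi_1,\ldots,\xi_p),\{\xi_{p+1},g\}\bigr)=\bigl(\{f_p(\xi_1,\ldots,\xi_p),g\},\xi_{p+1}\bigr)$, again by pairing-compatibility. Since the pairing $(-,-)\colon H^1\times H^1\to\K$ is non-degenerate and $\xi_{p+1}$ was arbitrary, \eqref{equ.boo} follows.

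\textbf{Main obstacle.} The delicate point is bookkeeping the slot $\xi_{p+1}$: the defining formula \eqref{eq:Looformula} is manifestly symmetric only in $\xi_1,\ldots,\xi_p$ (with $\xi_{p+1}$ singled out as the pairing partner), yet the quantities $I_j^{p+1}$ and $F_j^{p+1}$ are genuinely symmetric in all $p+1$ arguments. Reconciling these — i.e. checking that the ``missing'' $i=p+1$ contribution is exactly the right-hand side of \eqref{equ.boo}, and that no shuffle is double-counted or dropped when $\xi_{p+1}$ is allowed to enter a block of size $j$ — is where the argument must be carried out with care. Everything else is a direct consequence of equivariance of the transfer maps and the identity \eqref{equ.sommaI_j}, which in turn rests only on $\eqref{eq:equivariance_for_i}$ and the infinitesimal symplectic property of $H^1$ as a $\mathfrak{g}$-module.
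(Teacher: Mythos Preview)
Your proof is correct and follows essentially the same approach as the paper: both argue by induction on $p$, first deriving \eqref{equ.sommaF_j} from the inductive hypothesis on \eqref{equ.boo} together with the cyclic identity $(\{x,g\},y)+(x,\{y,g\})=0$, and then proving \eqref{equ.boo} by pairing against an arbitrary $\xi_{p+1}$, expanding via \eqref{eq:Looformula}, and using \eqref{equ.sommaI_j} and \eqref{equ.sommaF_j} to transfer $\{-,g\}$ from the first $p$ slots onto the $(p{+}1)$-st. One small inaccuracy in your ``Main obstacle'' commentary: $I_j^{p+1}$ and $F_j^{p+1}$ are \emph{not} symmetric in all $p+1$ arguments---only separately in the first $j$ and the last $p{+}1{-}j$---but this plays no role in the argument, since what you actually use are the vanishing identities \eqref{equ.sommaI_j} and \eqref{equ.sommaF_j}, applied to the fixed input string $(\xi_{\sigma(1)},\ldots,\xi_{\sigma(p)},\xi_{p+1})$ for each shuffle $\sigma$, and these hold without any full symmetry.
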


\begin{proof} The above formula are trivially satisfied  for $p=2$, since $f_2=0$ and  \eqref{equ.sommaF_j} is empty.  Assuming  \eqref{equ.boo} valid for all integers smaller than $p$ we have 
\[ \begin{split}
&\sum_{i=1}^{p+1}F_j^{p+1}(\xi_1,\ldots,\{\xi_i,g\},\ldots,\xi_{p+1})\\
&\quad=(\{f_j(\xi_1,\ldots,\xi_j),g\},f_{p-j-1}(\xi_1,\ldots,\xi_j))+
(f_j(\xi_1,\ldots,\xi_j),\{f_{p-j-1}(\xi_1,\ldots,\xi_j),g\})=0\,,\end{split}\]
where the second equality follows from the cyclic condition 
$(\{x,g\},y)+(x,\{y,g\})=0$ for all $g\in\mathfrak{g}$ and $x,y\in H^1$.
For the same reason we have

\[\begin{split} 	
\left(\left\{f_p(\xi_1,\ldots,\xi_p),g\right\},\xi_{p+1}\right)&= - \left(f_p(\xi_1,\ldots,\xi_p),\left\{\xi_{p+1},g\right\}\right)\\ 
&= \left(- \frac{1}{2}\right) \sum_{j=2}^{p-1}\sum_{\sigma\in S(j,p-j)}     
(I^{p+1}_j-F^{p+1}_j)(\xi_{\sigma(1)},\ldots,\xi_{\sigma(p)},\left\{\xi_{p+1},g\right\})\\
&=\frac{1}{2}\sum_{i=1}^p\sum_{j=2}^{p-1}\sum_{\sigma\in S(j,p-j)}     
(I^{p+1}_j-F^{p+1}_j)(\xi_{\sigma(1)},\ldots,\left\{\xi_{\sigma(i)},g\right\},\ldots,\xi_{p+1})\\
&=\sum_{h=1}^p  \left(f_p(\xi_1,\ldots,\{\xi_h,g\},\ldots,\xi_p),\xi_{p+1}\right),
\end{split}\]         
where in the second and the fourth  equalities we have used 
by the defining formula \eqref{eq:Looformula}, while the third equality 
is a consquence of \eqref{equ.sommaI_j} and \eqref{equ.sommaF_j}.  Since $(-,-)$ is non-degenerate in $H^1$, the formula 
\[ \left(\left\{f_p(\xi_1,\ldots,\xi_p),g\right\},\xi_{p+1}\right)=
\sum_{h=1}^p  \left(f_p(\xi_1,\ldots,\{\xi_h,g\},\ldots,\xi_p),\xi_{p+1}\right)\,\]
is completely equivalent to \eqref{equ.boo}.
\end{proof}

Finally we prove equation \eqref{eq:Loo2},
or equivalently that 
 \begin{equation*}\label{eq:Loo2bis} 2\left(\left\{ \xi_1,\ldots,\xi_{p+1} \right\}_{p+1},g\right) = \sum_{j=1}^{p}\sum_{\sigma\in S(j,p+1-j)}(\left\{ f_j(\xi_{\sigma(1)},\ldots),f_{p+1-j}(\ldots,\xi_{\sigma(p+1)})\right\},g),
\end{equation*}
for every $\xi_1,\ldots,\xi_{p+1}\in H^1$ and $g\in\mathfrak{g}$. By Equation~\eqref{eq:transfer3} we have 
\[\begin{split} 
2\left(\left\{ \xi_1,\ldots,\xi_{p+1} \right\}_{p+1}, g \right) &= \sum_{j=2}^{p-1}\sum_{\sigma\in S(j,p+1-j)}\left(\pi\left[ \imath_j\left(\xi_{\sigma(1)},\ldots\right) ,\imath_{p+1-j}\left(\ldots,\xi_{\sigma(p+1)}\right) \right],g\right)\\
&= \sum_{j=2}^{p-1}\sum_{\sigma\in S(j,p+1-j)}\left(\left[ \imath_j\left(\xi_{\sigma(1)},\ldots\right) ,\imath_{p+1-j}\left(\ldots,\xi_{\sigma(p+1)}\right) \right],\imath_1(g)\right) \; .
\end{split}
\]
By using the cyclic relation $([l_1,l_2],l_3)=(l_1,[l_2,l_3])$, $\forall\;l_1,l_2,l_3\in L$, and Equation \eqref{eq:equivariance_for_i} we get

\[ \begin{split} 2\left(\left\{ \xi_1,\ldots,\xi_{p+1} \right\}_{p+1}, g \right) &= 
\sum_{j=2}^{p-1}\sum_{\sigma\in S(j,p+1-j)}\left(\imath_j\left(\xi_{\sigma(1)},\ldots\right) ,[\imath_{p+1-j}\left(\ldots,\xi_{\sigma(p+1)}\right),\imath(g)]\right)  \\ 
 &= \sum_{j=2}^{p-1}\sum_{\sigma\in S(j,p-j,1)}\left( \imath_j\left(\xi_{\sigma(1)},\ldots\right),\imath_{p+1-j}\left(\ldots,\xi_{\sigma(p)}, \left\{ \xi_{\sigma(p+1)},g \right\}\right) \right)\\
&=\sum_{j=2}^{p-1}\sum_{\sigma\in S(j,p-j,1)}I^{p+1}_j\left(\xi_{\sigma(1)},\ldots,
\xi_{\sigma(p)}, \left\{ \xi_{\sigma(p+1)},g \right\}\right).  
	\end{split}\]
where 	$S(j,p-j,1)$ is the set of permutations $\sigma$ of $1,\ldots,p+1$ such that 
\[ \sigma(1)<\cdots<\sigma(j),\qquad \sigma(j+1)<\cdots<\sigma(p)\,.\]
On the other side, 
\[\begin{split}
\sum_{j=1}^{p}&\sum_{\sigma\in S(j,p+1-j)}\left( \left\{ f_j(\xi_{\sigma(1)},\ldots),f_{p+1-j}(\ldots,\xi_{\sigma(p+1)})\right\},g\right)\\  
&=\sum_{j=1}^{p}\sum_{\sigma\in S(j,p+1-j)}
\left( f_j(\xi_{\sigma(1)},\ldots),\{f_{p+1-j}(\ldots,\xi_{\sigma(p+1)}),g\}\right)\\
&= \sum_{j=2}^{p-1}\sum_{\sigma\in S(j,p-j,1)} \left( f_j(\xi_{\sigma(1)},\ldots),\{f_{p+1-j}(\ldots,\xi_{\sigma(p+1)}),g\}\right)\\ 
&\qquad\quad+\sum_{\sigma\in S(p,1)} \left( f_p\left(\xi_{\sigma(1)},\ldots,\xi_{\sigma(p)}\right), \left\{\xi_{\sigma(p+1)},g\right\} \right). 
\end{split}\]
Reasoning as before and using the already proved Equation \eqref{eq:Loo1} we have
\[\begin{split}
\sum_{j=1}^{p}\sum_{\sigma\in S(j,p+1-j)}&\left( \left\{ f_j(\xi_{\sigma(1)},\ldots),f_{p+1-j}(\ldots,\xi_{\sigma(p+1)})\right\},g\right)\\  
&= \sum_{j=2}^{p-1}\sum_{\sigma\in S(j,p-j,1)} \left( f_j\left(\xi_{\sigma(1)},\ldots  \right), f_{p+1-j}\left(\ldots, \xi_{\sigma(p)},\left\{ \xi_{\sigma(p+1)},g\right\}\right)  \right)\\ 
&\qquad+\sum_{\sigma\in S(p,1)} \left( f_p\left(\xi_{\sigma(1)},\ldots,\xi_{\sigma(p)}\right), \left\{\xi_{\sigma(p+1)},g\right\} \right) \\ 
&= \sum_{j=2}^{p-1}\sum_{\sigma\in S(j,p-j,1)} F^{p+1}_j\left(\xi_{\sigma(1)},\ldots,\xi_{\sigma(p)},
\left\{ \xi_{\sigma(p+1)},g\right\}\right) \\ 
&\qquad+\sum_{\sigma\in S(p,1)} \left( f_p\left(\xi_{\sigma(1)},\ldots,\xi_{\sigma(p)}\right), \left\{\xi_{\sigma(p+1)},g\right\} \right) \\ 
&=\sum_{j=2}^{p-1}\sum_{\sigma\in S(j,p-j,1)} I^{p+1}_j\left(\xi_{\sigma(1)},\ldots,
\xi_{\sigma(p)}, \left\{ \xi_{\sigma(p+1)},g \right\}\right),
\end{split}
\]
where in the last equality we used the recursive definition \eqref{eq:Looformula} of $f_p$.
The proof of Theorem~\ref{thm.formality} is now complete.


\begin{corollary}\label{cor.formality} 
	Let $(L,d,[-,-],(-,-))$ be a quasi-cyclic DG-Lie algebra of degree $2$ with $H^i(L)=0$ for every $i\not=0,1,2$. Assume that there exists a Lie subalgebra $H^0\subset Z^0(L)$ such that:
		\begin{enumerate} 
		
		\item the projection $Z^0(L)\to H^0(L)$ induces an isomorphism $H^0\simeq H^0(L)$;
		
		\item $L^i$ is a completely reducible $H^0$-module (with respect to the adjoint action) for all $i$;
			
\end{enumerate}
		Then the DG-Lie algebra $(L,d,[-,-])$ is formal.
		
	\end{corollary}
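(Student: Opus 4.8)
The plan is to deduce Corollary~\ref{cor.formality} directly from Theorem~\ref{thm.formality} by exhibiting an $H^0$-equivariant splitting $L=H\oplus d(K)\oplus K$ whose degree-zero piece is the given subalgebra $H^0$. The key preliminary observation is that, since $H^0\subset Z^0(L)$, the adjoint action of $H^0$ commutes with the differential: for $a\in H^0$ and homogeneous $x\in L$ one has $d[a,x]=[da,x]+[a,dx]=[a,dx]$ because $da=0$. Hence each $d\colon L^i\to L^{i+1}$ is a morphism of $H^0$-modules, and therefore every $Z^i(L)$ and every $B^i(L)=d(L^{i-1})$ is an $H^0$-submodule of $L^i$.

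Using complete reducibility I would then build the splitting degree by degree. For every $i$ I pick an $H^0$-submodule $K^i\subseteq L^i$ complementary to $Z^i(L)$; for every $i\neq0$ I pick an $H^0$-submodule $H^i\subseteq Z^i(L)$ complementary to $B^i(L)$ (submodules of the completely reducible $L^i$ are again completely reducible, so such complements exist); and for $i=0$ I take $H^0$ itself, which is an $H^0$-submodule of $Z^0(L)$ because it is a Lie subalgebra, and which is complementary to $B^0(L)$ precisely by the assumed isomorphism $H^0\xrightarrow{\sim}H^0(L)$. Since $d$ restricts to an $H^0$-equivariant isomorphism $K^i\xrightarrow{\sim}B^{i+1}(L)$, setting $H=\bigoplus_iH^i$ and $K=\bigoplus_iK^i$ gives $L^i=H^i\oplus d(K^{i-1})\oplus K^i$ for every $i$, i.e.\ a splitting of $(L,d)$ in the sense of Section~\ref{sec.review}, with $H^i\cong H^i(L)$.

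It then remains to verify the three hypotheses of Theorem~\ref{thm.formality}: condition~(1) holds because $H^i\cong H^i(L)=0$ for $i<0$ by assumption; condition~(2) holds because $H^0$ was chosen to be a Lie subalgebra; and condition~(3) holds by construction, every $H^i$ and $K^i$ being an $H^0$-submodule. Theorem~\ref{thm.formality} then gives the formality of $(L,d,[-,-])$. I do not anticipate a genuine obstacle here: the content of the corollary lies entirely in Theorem~\ref{thm.formality}, and the only points requiring a little care are the $H^0$-equivariance of $d$ (which makes the equivariant complements available inside $Z^i(L)$ and $B^i(L)$) and the fact that the prescribed $H^0$ is exactly an equivariant complement to the coboundaries in degree zero. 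The corollary is thus a repackaging of the theorem's hypotheses into the form (complete reducibility of the $L^i$ together with a choice of $H^0$) that is convenient for the geometric applications of Section~\ref{sec.formality}.
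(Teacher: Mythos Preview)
Your proposal is correct and follows essentially the same approach as the paper's proof: both construct an $H^0$-equivariant splitting by choosing $H^0$-stable complements $H^i$ of $B^i(L)$ inside $Z^i(L)$ (for $i\neq 0$) and $K^i$ of $Z^i(L)$ inside $L^i$, then invoke Theorem~\ref{thm.formality}. You simply supply more of the routine justifications (the $H^0$-equivariance of $d$, the availability of complements via complete reducibility) that the paper leaves to the reader.
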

\begin{proof}
Construct a splitting by choosing for every $i\not=0$ a direct sum decomposition
$Z^i(L)=B^i(L)\oplus H^i$ of $H^0$-modules and then,   
for every $i$, a direct sum decomposition
$L^i=Z^i(L)\oplus K^i$ of $H^0$-modules.  This splitting satisfies the conditions of Theorem~\ref{thm.formality}.
\end{proof}

\section{Derived endomorphisms and their formality}
\label{sec.formality}

For every coherent sheaf $\sF$ on a smooth complex projective manifold  $X$ there is well-defined homotopy class of DG-Lie algebras  denoted by 
$\RHom_X(\sF,\sF)$ and called, with a little abuse of language, the \emph{DG-Lie algebra of derived endomorphisms} of $\sF$. There exists several possible (quasi-isomorphic) representatives for 
$\RHom_X(\sF,\sF)$ and we refer to \cite{Mea18} for an explicit and concrete description of many of them.
The importance of the DG-Lie algebra of derived endomorphisms relies on the facts that
it  controls the deformation theory of $\sF$ in the usual way via Maurer-Cartan equation modulus gauge action, cf.  \cite{AS,BZ18,DMcoppie,Mea18}.  
Moreover $H^i(\RHom_X(\sF,\sF))=\Ext^i(\sF,\sF)$ for every $i$.

Since the notion of quasi-cyclic DG-Lie algebra is not stable under general quasi-isomorphisms, in 
view of a possible application of Theorem~\ref{thm.formality} it is useful to consider the Dolbeault 
representatives for  $\RHom_X(\sF,\sF)$. Consider a \emph{finite locally free} 
resolution $\sE^*=\{\cdots \sE^{-1}\to \sE^0\}\to \sF$  and denote by 
\[ \HOM^*_{\Oh_X}(\sE^*,\sE^*)=\bigoplus_d\HOM^d_{\Oh_X}(\sE^*,\sE^*)=\bigoplus_d\bigoplus_{p}\HOM_{\Oh_X}\left(\sE^p,\sE^{d+p}\right)\,.\]
the (DG) sheaf of endomorphisms of $\sE^*$. Then $\HOM^*_{\Oh_X}(\sE^*,\sE^*)$ is a sheaf of 
DG-Lie algebras over $X$. It is important to notice that the bracket $[f,g]=fg-(-1)^{|f|\,|g|}gf$ 
is $\Oh_X$-bilinear  and therefore it can be extended naturally to the 
Dolbeault's resolution 
\[ L=A^{0,*}_X(\HOM^*_{\Oh_X}(\sE^*,\sE^*))=\bigoplus_{p,q,r} A^{0,p}_X(\HOM_{\Oh_X}(\sE^q,\sE^{r})),\]
where  $A^{0,p}_X(\sG)$  denotes the space of global differential forms of type $(0,q)$ with values in 
the locally free sheaf $\sG$. Similarly the usual trace map (see e.g. \cite{DMcoppie} and references therein)  
\[\Tr\colon \HOM^*_{\Oh_X}(\sE^*,\sE^*)\to \Oh_X,\qquad \Tr(f)=\sum_{i}(-1)^i\Tr(f_i^i),\text{ where } 
f=\sum_{i,j}f_{i}^{j},\quad
f_{i}^{j}\colon \sE^i\to \sE^j\,,\] 
is a morphism of sheaves of DG-Lie algebras, and  extends  to a morphism of DG-Lie algebras
\[ \Tr\colon L=A^{0,*}_X(\HOM^*_{\Oh_X}(\sE^*,\sE^*))\to  A^{0,*}_X\,.\]
Since the bracket on $A^{0,*}_X$ is trivial we have 
\[ \Tr([f,g])=0,\qquad \Tr(df)=\debar\Tr(f),\]
for every 
$f,g\in L$ and this immediately implies that $\Tr([f,g]h)=\Tr(f[h,g])$  
for every $f,g,h\in L$. If $\omega$ is a nontrivial section of the canonical bundle of $X$, the graded 
symmetric bilinear form 
\begin{equation}\label{equ.scalarproduct}
(-,-)\colon L^{\odot 2}\to \C[-\dim X],\qquad (f,g)=\int_X\omega\wedge \Tr(fg)\,,
\end{equation}
is a cyclic bilinear form, where this means that it satisfies the conditions 
$(df,g)+(-1)^{|f|}(f,dg)=0$ and $([f,g],h)=(f,[g,h])$. 
Finally if $\omega$ is a holomorphic volume form, by Serre duality the above bilinear form is non-degenerate in cohomology and therefore $(L,(-,-))$ is a quasi-cyclic DG-Lie algebra of degree $\dim X$.

\bigskip 
From now on we consider only coherent sheaves on 
projective surfaces with torsion canonical bundle.	
 	According to the Enriques-Kodaira classification of surfaces, see e.g. 
\cite{BPV,Beauville},  a smooth projective surface has torsion canonical bundle $K$ if and only if it is minimal of Kodaira dimension $0$. According to the values of irregularity $q$ and geometric genus $p_g$, these surfaces are classified into four (non empty) distinguished classes:
\begin{itemize}

\item projective K3 surfaces, with $q=0$, $p_g=1$ and $K=0$;

\item Enriques surfaces,  with $q=0$, $p_g=0$ and $2K=0$;

\item bi-elliptic surfaces, with $q=1$, $p_g=0$ and $nK=0$ for some $n=2,3,4,6$;

\item Abelian surfaces, with $q=2$, $p_g=1$ and  $K=0$.

\end{itemize}

We are now ready to prove the Kaledin-Lehn formality conjecture for the above 
surfaces, namely that $\RHom_X(\sF,\sF)$ is formal whenever $\sF$ is  polystable with respect to any (possibly non-generic) polarization, see e.g. \cite[Chapter 1]{HL}. It is useful and instructive to give first a separate proof for the cases of K3 and abelian surfaces.

\begin{theorem}\label{thm.formality1} 
Let $X$ be a complex projective surface with trivial canonical bundle and let $\sF$ be a coherent sheaf on $X$. If the group of automorphisms of $\sF$ is linearly reductive, e.g. if
$\sF$ is  polystable, then  the DG-Lie algebra 
$\RHom_X(\sF,\sF)$ is formal.
\end{theorem}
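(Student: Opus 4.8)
The strategy is to produce an explicit quasi-cyclic Dolbeault model for $\RHom_X(\sF,\sF)$ and then verify that the polystability (more precisely, linear reductivity of $\Aut(\sF)$) yields a splitting satisfying the hypotheses of Theorem~\ref{thm.formality}. First I would fix a finite locally free resolution $\sE^*\to\sF$ and form the Dolbeault DG-Lie algebra $L=A^{0,*}_X(\HOM^*_{\Oh_X}(\sE^*,\sE^*))$, which represents $\RHom_X(\sF,\sF)$; since $X$ is a surface with trivial canonical bundle, pairing against a holomorphic volume form $\omega$ via $(f,g)=\int_X\omega\wedge\Tr(fg)$ makes $L$ a quasi-cyclic DG-Lie algebra of degree $n=\dim X=2$, as explained in the paragraph surrounding \eqref{equ.scalarproduct}. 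By Serre duality the induced pairing on $H^*(L)=\Ext^*_X(\sF,\sF)$ is non-degenerate, and $H^i(L)=0$ for $i<0$ because $\sF$ has a resolution concentrated in non-positive degrees (so $\Ext^i_X(\sF,\sF)=0$ for $i<0$), and by duality also for $i>2$.

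The key point is to find a Lie subalgebra $H^0\subset Z^0(L)$ mapping isomorphically onto $H^0(L)=\Hom_X(\sF,\sF)=\End(\sF)$ such that every $L^i$ is a completely reducible $H^0$-module under the adjoint action; then Corollary~\ref{cor.formality} applies directly. Here is where polystability enters: for a polystable sheaf $\Aut(\sF)$ is linearly reductive and $\End(\sF)$ is a (finite-dimensional) reductive Lie algebra, in fact a product of matrix algebras $\prod_i \gl_{n_i}(\C)$ corresponding to the stable summands. I would lift this finite-dimensional algebra into $Z^0(L)$ — for instance by choosing the resolution $\sE^*$ and the quasi-isomorphism $\sE^*\to\sF$ compatibly with the direct sum decomposition $\sF=\bigoplus_i \sF_i^{\oplus n_i}$ into stable summands, so that $\End(\sF)$ acts by actual (degree $0$, $\debar$-closed) endomorphisms of $\sE^*$. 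Since a reductive Lie algebra acts completely reducibly on any finite-dimensional representation (Weyl's theorem on complete reducibility, together with the fact that the action of the center integrates to a torus, hence is diagonalizable), each finite-dimensional piece $A^{0,p}_X(\HOM_{\Oh_X}(\sE^q,\sE^r))$ is a completely reducible $H^0$-module, and therefore so is each $L^i$.

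With this $H^0$ in hand, Corollary~\ref{cor.formality} gives the formality of $L$, hence of $\RHom_X(\sF,\sF)$. I expect the main obstacle to be the verification of complete reducibility of the $H^0$-action on the (infinite-dimensional) Dolbeault spaces $L^i$: one must argue that these decompose as direct sums (or at least as suitable completions/direct sums) of finite-dimensional $H^0$-modules so that a splitting into $H^0$-submodules exists, and that the differential $d=\debar$ can be chosen $H^0$-equivariantly — this is automatic once the resolution and its endomorphism action are set up equivariantly, but it requires care, particularly to ensure $Z^i(L)=B^i(L)\oplus H^i$ and $L^i=Z^i(L)\oplus K^i$ can both be taken as decompositions of $H^0$-modules. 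The remaining subtlety is purely bookkeeping: checking that the reductive structure of $\End(\sF)$ for a polystable $\sF$ indeed produces a genuine Lie subalgebra of $Z^0(L)$ and not merely of $H^0(L)$, which is why working with the Dolbeault model (rather than an arbitrary quasi-isomorphic representative) is essential.
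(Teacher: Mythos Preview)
Your overall strategy matches the paper's: build the Dolbeault model $L=A^{0,*}_X(\HOM^*_{\Oh_X}(\sE^*,\sE^*))$ with the pairing \eqref{equ.scalarproduct}, lift $\End(\sF)$ to a Lie subalgebra $H^0\subset Z^0(L)$ via a well-chosen resolution, and apply Corollary~\ref{cor.formality}. The lifting step is handled in the paper exactly as you suggest, by taking a resolution adapted to the reductive group $G=\Aut(\sF)$ (in fact a $G$-equivariant one, constructed in \cite{quadraticity}), so that $H^0=T_{\Id}G$ sits inside $Z^0(L)$.

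The genuine gap is in your complete reducibility argument. The spaces $A^{0,p}_X(\HOM_{\Oh_X}(\sE^q,\sE^r))$ are \emph{not} finite-dimensional---they are spaces of global smooth $(0,p)$-forms---so Weyl's theorem does not apply, and your sentence ``each finite-dimensional piece \ldots\ is a completely reducible $H^0$-module'' is simply false as stated. You flag this yourself in the last paragraph but do not resolve it. The paper's fix is to work with the \emph{group} $G$ rather than the Lie algebra and to show that each $L^i$ is a \emph{rational finitely supported} representation of $G$, i.e.\ isomorphic to a finite direct sum $\bigoplus_k H_k\otimes W_k$ with $H_k$ irreducible rational (hence finite-dimensional) and $W_k$ trivial. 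This is done by covering $X$ by affines trivialising $\sE^*$ and invoking \cite[Lemma~3.5]{quadraticity} to see that $\Gamma(U_j,\HOM^*_{\Oh_X}(\sE^*,\sE^*))$ is finitely supported; then $L$ embeds in a finite direct sum of such, and finitely supported representations are stable under sub and quotient. Once $L$ has this structure, complete reducibility (and hence the existence of a $G$-equivariant, a fortiori $H^0$-equivariant, splitting) is immediate. Your direct-sum-of-resolutions idea would in fact produce exactly this finitely supported structure if carried through, but as written the argument does not stand.
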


\begin{proof} Let's denote by $G$ the linearly reductive group of automorphisms of $\sF$. 
Since $X$ is smooth projective it is not difficult to see that there exists a $G$-equivariant finite locally free resolution $\sE^*=\{0\to \sE^{-2}\to \sE^{-1}\to \sE^0\}\to \sF$; a detailed proof is given for instance in \cite{quadraticity}. We claim that the DG-Lie algebra $L=A^{0,*}_X(\HOM^*_{\Oh_X}(\sE^*,\sE^*))$ satisfies the condition of Theorem~\ref{thm.formality}, when equipped with the cyclic non-degenerate structure  \eqref{equ.scalarproduct}.  

Assume for the moment that the induced action of $G$ on $L^i$ is rational for every $i$;  since the action of $G$ commutes with the differential of the resolution $\sE^*$ we have a natural inclusion $G\subset Z^0(L)$ 
and we can take $H^0=T_{\Id}G\subset Z^0(L)\subset L^0$ as the Lie algebra of $G$.  
Then, since $G$ is assumed to be linearly reductive we may extend $H^0$ to a 
$G$-equivariant splitting of $L$ that clearly satisfies the hypotheses of Theorem~\ref{thm.formality}.   
 
It remains to be shown that $L^i$ is a rational representation of $G$ for every $i$. This follows immediately from the results of \cite{quadraticity} and we give here only a sketch of proof. 
The key point  is that if $G$ acts on a coherent sheaf $\sG$ then for every open affine subset $U$, then  
the space $\sG(U)$ is a rational \emph{finitely supported} representation of $G$ \cite[Lemma 3.5]{quadraticity}. Recall that a representation is finitely supported if it is isomorphic to a finite direct sum $\bigoplus_{i=1}^nH_i\otimes W_i$, for some irreducible rational (hence finite-dimensional) representations $H_i$ and some trivial representations $W_i$; every subrepresentation and every quotient of a rational finitely supported representation remains finitely supported \cite[Lemma 2.7 and Remark 2.8]{quadraticity}.

Let $X=\bigcup_j U_j$ be a finite open affine cover such that $\sE^*$ is free over $U_j$ for every  $j$. 
Then $\Gamma\left(U_j,\HOM^*_{\Oh_X}(\sE^*,\sE^*)\right)$ is rational and finitely supported for every $j$, therefore also 
\[ L\subset \bigoplus_{j} A^{0,*}_{U_j}\left(\HOM^*_{\Oh_X}(\sE^*,\sE^*)\right)
\subset \bigoplus_{j}A^{0,*}_{U_j}\otimes_{\C} \Gamma\left(U_j, \HOM^*_{\Oh_X}(\sE^*,\sE^*)\right) \]
is a rational finitely supported representation of $G$.
\end{proof}

Let's now come back to our initial situation, namely with 
$\sF$  a  polystable sheaf  on a  smooth  projective surface $X$ with torsion canonical bundle $K_X=\Omega^2_X$. We denote by  $n$ be the smallest positive integer such that $K_X^{\otimes n}\simeq \Oh_X$ (we already know that $n=1,2,3,4,6$). 

Every choice of an isomorphism $K_X^{\otimes n}\xrightarrow{\simeq}\Oh_X$ induces naturally a structure of commutative $\Oh_X$-algebra on the locally free sheaf of rank $n$
\[ \sC:=\Oh_X\oplus K_X\oplus K^{\otimes 2}_X\oplus \cdots \oplus K^{\otimes n-1}_X\,.\]
Since $K_X$ is a torsion line bundle we have that also $\sF\otimes \sC$ is 
polystable.

Let $\sE^*=\{\cdots \sE^{-1}\to \sE^0\}\to \sF$ be any finite locally free resolution, 
then $\sE^*\otimes\sC$ is a finite locally free resolution of $\sF\otimes \sC$.
Moreover 
\[ \HOM^*_{\Oh_X}(\sE^*\otimes\sC,\sE^*\otimes\sC)=\bigoplus_{i,j=0}^{n-1}
\HOM^*_{\Oh_X}\left(\sE^*\otimes K_X^{\otimes i},\sE^*\otimes K_X^{\otimes j}\right),\]
and every direct summand is a $\HOM^*_{\Oh_X}(\sE^*,\sE^*)$-module via the adjoint action. 
The trace map extends  naturally to a morphism of sheaves 
\[\begin{split}
\widetilde{\Tr}\colon &\HOM^*_{\Oh_X}(\sE^*\otimes\sC,\sE^*\otimes\sC)\to \sC,\\[3pt] 
&\text{with components}\\
\widetilde{\Tr}\colon &\HOM^*_{\Oh_X}\left(\sE^*\otimes K_X^{\otimes i},\sE^*\otimes K_X^{\otimes j}\right)=
\HOM^*_{\Oh_X}\left(\sE^*,\sE^*\right)\otimes K_X^{\otimes j-i}\xrightarrow{\Tr\otimes\Id} K_X^{\otimes j-i}\,.
\end{split}\]
The DG-Lie algebra $L=A^{0,*}_X(\HOM^*_{\Oh_X}(\sE^*\otimes \sC,\sE^*\otimes \sC))$
is quasi-cyclic of degree 2, when equipped with the pairing  
\[ (f,g)=\int_X p_{K}\widetilde{\Tr}(fg)\,,\]
where
$p_K\colon A^{0,*}_X(\sC)\to A^{0,2}_X(K_X)=A^{2,2}_X$ is the projection. In fact, for every $0\le i,j<n$ the above pairing induces the Serre duality isomorphism 
\[\Ext^h_X\left(\sF\otimes K^{\otimes i}_X, \sF\otimes K^{\otimes j}_X\right)\simeq 
\Ext^{2-h}_X\left(\sF\otimes K^{\otimes j}_X, \sF\otimes K^{\otimes i+1}_X\right)^\vee \]
so that it is non-degenerate in cohomology.

\begin{lemma}\label{lem.risoluzioneadattata} 
In the above situation there exists a 
finite locally free resolution $\sE^*\to \sF$ such that every endomorphism of 
$\sF\otimes \sC$ lifts canonically to an endomorphism of the complex 
$\sE^*\otimes \sC$.
\end{lemma}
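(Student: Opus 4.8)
The plan is to produce $\sE^*$ by descending a \emph{functorially constructed} resolution from the canonical cyclic cover. Let $\pi\colon Y=\operatorname{Spec}_X\sC\to X$ be the connected cyclic covering determined by the chosen isomorphism $K_X^{\otimes n}\cong\Oh_X$; it is finite, \'etale and Galois with group $\Gamma\cong\Z/n\Z$, the surface $Y$ is smooth projective, and $\pi_*\Oh_Y=\sC$. Put $\sG:=\pi^*\sF$, which carries its tautological $\Gamma$-equivariant structure and satisfies $\pi_*\sG=\sF\otimes_{\Oh_X}\sC$. Fix an ample line bundle on $X$ and pull it back to a $\Gamma$-equivariant ample line bundle $\mathcal{L}$ on $Y$. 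For $m\gg0$ the evaluation map $H^0(Y,\sG\otimes\mathcal{L}^{m})\otimes_{\C}\mathcal{L}^{-m}\to\sG$ is a $\Gamma$-equivariant surjection; iterating this recipe (and using that the second syzygy is automatically locally free, $Y$ being a smooth surface) produces a $\Gamma$-equivariant finite locally free resolution $\mathcal{D}^{*}=\{0\to\mathcal{D}^{-2}\to\mathcal{D}^{-1}\to\mathcal{D}^{0}\}\to\sG$. By $\Gamma$-equivariant (Galois) descent along the torsor $\pi$, the complex $\mathcal{D}^{*}$ together with its augmentation is the pullback of a finite locally free resolution $\sE^{*}\to\sF$ on $X$; and then $\sE^{*}\otimes\sC=\pi_*\pi^{*}\sE^{*}=\pi_*\mathcal{D}^{*}$ is a finite locally free resolution of $\pi_*\sG=\sF\otimes\sC$.

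The key point is that, for $m$ large enough, the above recipe $\sG\rightsquigarrow\mathcal{D}^{*}$ is \emph{functorial} in $\sG$ (global sections, tensoring by a line bundle, and taking kernels are all functorial), so it induces a unital ring homomorphism $\End_{\Oh_Y}(\sG)\to\End^{0}_{\mathrm{cplx}}(\mathcal{D}^{*})$ into the algebra of degree-$0$ chain endomorphisms of $\mathcal{D}^*$, sectioning the natural projection onto $H^{0}=\End_{\Oh_Y}(\sG)$, and this homomorphism is $\Gamma$-equivariant. Pushing forward along $\pi$, it yields a ring homomorphism from the $\sC$-linear endomorphisms of $\sF\otimes\sC$ (which are exactly $\End_{\Oh_Y}(\sG)$) to the chain endomorphisms of $\sE^{*}\otimes\sC$. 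On the other hand the operators $\mathrm{id}_{\sF}\otimes(\text{the }\Gamma\text{-action on }\sC)$, i.e. the $\sC$-grading operators on $\sF\otimes\sC=\bigoplus_{a}\sF\otimes K_X^{a}$, visibly lift to the identical operators on $\sE^{*}\otimes\sC=\bigoplus_{a}\sE^{*}\otimes K_X^{a}$. Since $\pi$ is \'etale Galois, $\End_{\Oh_X}(\sF\otimes\sC)=\End_{\Oh_X}(\pi_*\sG)$ is the crossed product $\End_{\Oh_Y}(\sG)\rtimes\Gamma$ — in particular it is generated as a $\C$-algebra by its $\sC$-linear part and the grading operators — and, by naturality of the construction, the two families of lifts are intertwined by exactly the same conjugation relations; hence they assemble to a ring homomorphism $\End_{\Oh_X}(\sF\otimes\sC)\to\End^{0}_{\mathrm{cplx}}(\sE^{*}\otimes\sC)$ splitting the canonical surjection onto $H^{0}$. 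This homomorphism is the desired canonical lift of endomorphisms.

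I expect the delicate part to be precisely this crossed-product bookkeeping: one must check that the $\Gamma$-action, which is only $\Oh_Y$-semilinear upstairs, becomes the honest $\Oh_X$-linear grading operators after pushing down, and that the ring homomorphism $\End_{\Oh_Y}(\sG)\to\End^{0}_{\mathrm{cplx}}(\mathcal{D}^{*})$ coming from functoriality is genuinely $\Gamma$-equivariant, so that the two lifts glue to a single algebra homomorphism on the crossed product $\End_{\Oh_X}(\sF\otimes\sC)=\End_{\Oh_Y}(\sG)\rtimes\Gamma$ rather than merely on the two summands separately. By contrast, the existence of the $\Gamma$-equivariant resolution and its descent to $X$ are routine, and — in contrast to the case of a general locally free resolution, where endomorphisms may fail to lift at all — here the lifting is automatic and functorial exactly because the resolution is built by the functorial recipe above.
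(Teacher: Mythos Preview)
Your approach is correct and takes a genuinely different route from the paper's. The paper argues directly from polystability: it writes $\sF=\bigoplus_{i=1}^{r}\sF_i\otimes\sW_i$, where the $\sF_i$ are stable representatives for the distinct $K_X$-twist classes among the stable summands of $\sF$ and each $\sW_i$ is a direct sum of powers of $K_X$, and then sets $\sE^*=\bigoplus_i\sE_i^*\otimes\sW_i$ for arbitrarily chosen resolutions $\sE_i^*\to\sF_i$; the canonical lift comes from the fact that $\End(\sF_i)=\C$, so scalars lift tautologically and everything else is matrix bookkeeping over these scalars. By contrast, you pass to the cyclic cover $Y$, build a \emph{functorial} $\Gamma$-equivariant resolution of $\pi^*\sF$ via global sections of twists by an equivariant ample line bundle, descend it to $X$, and read off the lift from the crossed-product identification $\End_{\Oh_X}(\sF\otimes\sC)\cong\End_{\Oh_Y}(\pi^*\sF)\rtimes\Gamma$ together with $\Gamma$-equivariance of the functorial lift. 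The paper's argument is shorter and more hands-on---no descent theory, no crossed products---but it leans essentially on the stable decomposition, and its computation of $\Aut(\sF\otimes\sC)$ tacitly uses that no $\sF_i$ is isomorphic to a nontrivial $K_X$-twist of itself. Your argument is more structural and in fact never invokes polystability: the functoriality of the global-sections resolution plus the crossed-product presentation produce the splitting ring homomorphism for \emph{any} coherent $\sF$, so you end up proving a slightly stronger statement than is needed.
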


\begin{proof} 
By assumption $\sF$ is a pure coherent sheaf that is a direct sum of stable sheaves 
with the same reduced Hilbert polynomial:
\[ \sF=\sF_1\oplus \cdots\oplus \sF_n\,.\]
In particular $\Hom_{\Oh_X}(\sF_i,\sF_j)=0$ for every $i\not=j$ and 
$\Hom_{\Oh_X}(\sF_i,\sF_i)=\C$ for every $i$.
Consider the following equivalence relation on the set of direct summands 
\[ \sF_i\sim \sF_j\iff \sF_i\otimes \sC\cong \sF_j\otimes \sC\,.\] 
Equivalently  $\sF_i\sim \sF_j$ if and only if $\sF_i$ is isomorphic to 
$\sF_j\otimes K^{\otimes h}$ for some $h$. Up to permutation of indices we may assume that 
$\sF_1,\ldots,\sF_r$ are a set or representatives for this equivalence relation. We may write
\[ \sF=\bigoplus_{i=1}^r \sF_i\otimes \sW_i\,,\]
where every $\sW_i$ is a direct sum of line bundles of type 
$K_X^{\otimes h}$. We have  
$\sF\otimes\sC=\bigoplus_{i=1}^r \sF_i\otimes \sC^{\oplus w_i}$ where $w_i$ is the rank of 
$\sW_i$. Every non-trivial endomorphism of $\sF_i$ is a scalar multiple of the identity and then
the group of automorphisms of $\sF\otimes\sC$ is the product of $n$ copies of 
$\prod_{i=1}^rGL_{w_i}(\C)$.

Choose $r$ finite locally free resolution 
$\sE_i^*\to \sF_i$: every endomorphism of $\sF_i$ is a scalar multiple of the identity and then  lifts canonically to $\sE_i^*$. It is now easy to verify that 
\[ \sE^*=\bigoplus_{i=1}^r \sE^*_j\otimes \sW_i\]
is resolution of $\sF$ with the required properties.
\end{proof}

\begin{theorem} \label{thm.formality2}
In the above situation both the 
DG-Lie algebras  $\RHom_X(\sF\otimes\sC,\sF\otimes\sC)$ and $\RHom_X(\sF,\sF)$ are formal.
\end{theorem}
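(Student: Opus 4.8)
The plan is to deduce both formality statements from Corollary~\ref{cor.formality} applied to an explicit Dolbeault model, together with the formality transfer Theorem~\ref{thm.formalitytransfer}. Fix once and for all a finite locally free resolution $\sE^*\to\sF$ as provided by Lemma~\ref{lem.risoluzioneadattata}, so that $\sE^*\otimes\sC\to\sF\otimes\sC$ is a finite locally free resolution and every endomorphism of $\sF\otimes\sC$ lifts canonically to an endomorphism of the complex $\sE^*\otimes\sC$. Set
\[ L=A^{0,*}_X(\HOM^*_{\Oh_X}(\sE^*\otimes\sC,\sE^*\otimes\sC)),\qquad M=A^{0,*}_X(\HOM^*_{\Oh_X}(\sE^*,\sE^*)); \]
by the discussion preceding Lemma~\ref{lem.risoluzioneadattata}, $L$ is a quasi-cyclic DG-Lie algebra of degree $2$ representing $\RHom_X(\sF\otimes\sC,\sF\otimes\sC)$, while $M$ is a Dolbeault representative of $\RHom_X(\sF,\sF)$. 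I would first prove that $L$ is formal, and then transfer formality from $L$ to $M$.

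For the formality of $L$ I would verify the hypotheses of Corollary~\ref{cor.formality}. Since $X$ is a surface, $\Ext^i_X(-,-)$ vanishes in negative degrees, and Serre duality $\Ext^i_X(\sG,\sG)\cong\Ext^{2-i}_X(\sG,\sG\otimes K_X)^\vee$ forces $H^i(L)=\Ext^i_X(\sF\otimes\sC,\sF\otimes\sC)=0$ for $i\neq 0,1,2$. The canonical lift of Lemma~\ref{lem.risoluzioneadattata} is multiplicative, hence it realizes the reductive Lie algebra $H^0:=\End_{\Oh_X}(\sF\otimes\sC)=\operatorname{Lie}(\Aut(\sF\otimes\sC))$ as a Lie subalgebra of $Z^0(L)$, and the projection $Z^0(L)\to H^0(L)$ restricts to an isomorphism on $H^0$ because the lift is compatible with the augmentation $\sE^*\otimes\sC\to\sF\otimes\sC$. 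Finally, since $\sF\otimes\sC$ is again polystable, the group $G=\Aut(\sF\otimes\sC)$ is linearly reductive and acts on $\sE^*\otimes\sC$ through the canonical lift; arguing exactly as in the proof of Theorem~\ref{thm.formality1}, by the finitely supported representations technique of \cite{quadraticity}, each $L^i$ is a rational and hence completely reducible $G$-module, a fortiori a completely reducible $H^0$-module. Corollary~\ref{cor.formality} then yields that $L$ is formal.

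To pass from $L$ to $M$ I would exhibit $M$ as a DG-Lie subalgebra of $L$ which is a direct summand as an $M$-module, and invoke the last assertion of Theorem~\ref{thm.formalitytransfer}. Because $\sC$ is locally free, there is a canonical isomorphism of sheaves of DG-Lie algebras $\HOM^*_{\Oh_X}(\sE^*\otimes\sC,\sE^*\otimes\sC)\cong\HOM^*_{\Oh_X}(\sE^*,\sE^*)\otimes_{\Oh_X}\mathcal{E}nd_{\Oh_X}(\sC)$, in which the internal differential acts on the first factor only. The splitting $\mathcal{E}nd_{\Oh_X}(\sC)=\Oh_X\!\cdot\!\operatorname{id}_{\sC}\oplus\mathcal{E}nd^0_{\Oh_X}(\sC)$ into scalars and traceless endomorphisms, split by the normalized fibrewise trace $\tfrac1n\operatorname{tr}\colon\mathcal{E}nd_{\Oh_X}(\sC)\to\Oh_X$, then induces a decomposition $L=M\oplus N$ with $N=A^{0,*}_X(\HOM^*_{\Oh_X}(\sE^*,\sE^*)\otimes_{\Oh_X}\mathcal{E}nd^0_{\Oh_X}(\sC))$. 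The map $f\mapsto f\otimes\operatorname{id}_{\sC}$ is an injective morphism of DG-Lie algebras $M\to L$, and since $\operatorname{id}_{\sC}$ is central one has $[f\otimes\operatorname{id}_{\sC},g\otimes h]=[f,g]\otimes h$, so $N$ is an $M$-submodule complementary to the image of $M$. Therefore Theorem~\ref{thm.formalitytransfer}, applied with this embedding and with $L$ formal by the previous paragraph, shows that $M=\RHom_X(\sF,\sF)$ is formal, hence so is $\RHom_X(\sF\otimes\sC,\sF\otimes\sC)$. (When $X$ is a K3 or an Abelian surface one has $n=1$, $\sC=\Oh_X$, and the argument collapses to Theorem~\ref{thm.formality1}.)

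The step I expect to be the real obstacle is the verification, in the second paragraph, that each $L^i$ is a completely reducible $H^0$-module: the Dolbeault spaces $L^i$ are infinite-dimensional, so reductivity of $G$ alone does not suffice and one must control the $G$-representation structure through the finitely supported representations of \cite{quadraticity}, keeping careful track that the $G$-action in play is precisely the one induced by the canonical lift of Lemma~\ref{lem.risoluzioneadattata}. Granting this, the remaining inputs — the Serre-duality vanishing of $H^i(L)$ outside degrees $0,1,2$, the identification of $H^0$ inside $Z^0(L)$, and the $M$-module splitting $L=M\oplus N$ — are largely formal, and Theorem~\ref{thm.formalitytransfer} closes the argument.
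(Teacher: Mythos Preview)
Your proof is correct and follows the same strategy as the paper: prove $L$ is formal via Corollary~\ref{cor.formality} (using the rational finitely supported representation argument from \cite{quadraticity}), then deduce formality of $M$ from Theorem~\ref{thm.formalitytransfer}. The only difference is the $M$-module splitting of $L$: the paper embeds $M$ as the $(i,j)=(0,0)$ block in the decomposition $L=\bigoplus_{i,j}A_X^{0,*}\bigl(\HOM^*_{\Oh_X}(\sE^*\otimes K_X^{\otimes i},\sE^*\otimes K_X^{\otimes j})\bigr)$, whereas you embed $M$ diagonally via $f\mapsto f\otimes\operatorname{id}_{\sC}$ and complement it by the traceless part of $\mathcal{E}nd_{\Oh_X}(\sC)$; both choices give an injective DG-Lie map whose image is an $M$-module direct summand, so either one feeds into Theorem~\ref{thm.formalitytransfer} equally well.
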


\begin{proof}
Let $\sE\to \sF$ be a resolution as in Lemma~\ref{lem.risoluzioneadattata} and consider the quasi-cyclic DG-Lie algebra 
\[ L=A^{0,*}_X(\HOM^*_{\Oh_X}(\sE^*\otimes \sC,\sE^*\otimes \sC))\]
as a representative in the homotopy class of $R\Hom(\sF\otimes\sC,\sF\otimes\sC)$. The same arguments used in the proof of  Theorem~\ref{thm.formality1} imply that $L$ is a rational and finitely supported representation of the linearly reductive group of automorphisms of $\sF\otimes\sC$.

By assumption there exists a natural inclusion of Lie algebras 
\[ \Hom_X(\sF\otimes\sC,\sF\otimes\sC)\simeq H^0\subset \Hom_X(\sE^*\otimes\sC,\sE^*\otimes\sC)=Z^0(L)\]
that induces an isomorphism  $H^0\simeq H^0(L)$.
The adjoint action of $\Hom_X(\sF\otimes\sC,\sF\otimes\sC)$ on $L$ is induced by a rational action 
of a linearly reductive algebraic group, hence the action of $H^0$ on $L$ is  completely reducible and the formality of $L$ follows from Corollary~\ref{cor.formality}.

Taking 
\[ M=A^{0,*}_X(\HOM^*_{\Oh_X}(\sE^*,\sE^*))\]
as a representative in the homotopy class of $R\Hom(\sF,\sF)$, we have already observed that 
there exists a natural inclusion of DG-Lie algebra $M\subset L$ together a decomposition of 
$L$ as a direct sum of $M$-modules:
\[ L=\bigoplus_{i,j=0}^{n-1}
A_X^{0,*}\left(\HOM^*_{\Oh_X}\left(\sE^*\otimes K_X^{\otimes i},\sE^*\otimes K_X^{\otimes j}\right)\right)\,.\]

Now the formality of $M$ is a direct consequence of the formality of $L$ and of the formality transfer theorem \ref{thm.formalitytransfer}.
\end{proof}

\noindent\textbf{Acknowledgements.} 
This work is carried out in the framework of the PRIN project  \lq\lq Moduli  and Lie theory\rq\rq\   2017YRA3LK.

\end{document}